\newtheorem{theorem}{Theorem}[section]
\newtheorem{lemma}[theorem]{Lemma}
\newtheorem{proposition}[theorem]{Proposition}
\newtheorem{definition}[theorem]{Definition}
\theoremstyle{remark}
\newtheorem{remark}{Remark}[section]
\newcommand{\Vol}{\mathrm{Vol}}
\newcommand{\Rmnum}[1]{\expandafter\@slowromancap\romannumeral #1@}
\begin{document}
\allowdisplaybreaks
\title{ Alexandrov-Fenchel type inequalities in the sphere}
\keywords{Alexandrov-Fenchel type inequalities, sphere, convexity}
\thanks{\noindent \textbf{MR(2010)Subject Classification}   53C23, 35J60, 53C42}
\author{Min Chen and Jun Sun}
\address{Min Chen, University of Science and Technology of China, No.96, JinZhai Road Baohe District,Hefei,Anhui, 230026,P.R.China.}
\email{cmcm@mail.ustc.edu.cn}
\address{Jun Sun, School of Mathematics and Statistics, Wuhan University, Wuhan, and Hubei Key Laboratory of Computational Science (Wuhan University), Wuhan, 430072, P. R. of China.}
\email{sunjun@whu.edu.cn}
\thanks{The authors are supported by the National Nature Science Foudation of China  No. 11721101 and National Key Research and Development Project No. SQ2020YFA070080. J. Sun is supported by NSFC 12071352}
\pagestyle{fancy}
\fancyhf{}
\renewcommand{\headrulewidth}{0pt}
\fancyhead[CE]{}
\fancyhead[CO]{\leftmark}
\fancyhead[LE,RO]{\thepage}

\begin{abstract}
In this paper, we attempt to use two types of flows to study the relations between quermassintegrals $\mathcal{A}_k$ (see Definition 1.1), which correspond to the Alexandrov-Fenchel inequalities for closed convex $C^2$-hypersurfaces in $\mathbb{S}_+^{n+1}.$
\end{abstract}

\maketitle
%===============================================================================
\numberwithin{equation}{section}
\section{Introduction}

The Alexandrov-Fenchel inequalities \cite{1,2} for the quermassintegrals of convex domains in $\mathbb{R}^{n+1}$ are fundamental in classical geometry. In \cite{3}, Guan and Li extended these inequalities to star-shaped domains in $\mathbb{R}^{n+1}$. There have been extensive interests on studying the Alexandrov-Fenchel type inequalities for quermassintegral in space forms. Let $N^{n+1}(K)$ be the space form with constant sectional curvature $K=1$, $0$ or $-1$. Under the Gaussian geodesic normal coordinates, the metric can be expressed as
\[ds^2=d\rho^2+\phi^2(\rho)dz^2,\]
where $\phi(\rho)=\sin\rho,\rho \in [0,\frac{\pi}{2})$ when $K=1$; $\phi(\rho)=\rho, \rho \in [0,\infty)$ when $K=0$; and $\phi(\rho)=\sinh\rho, \rho \in [0,\infty)$ when $K=-1,$ and $dz^2$ is the induced standard metric on ${\mathbb S}^n$ in Euclidean space.
We denote
\[\Phi(\rho)=\int^{\rho}_0\phi(s)ds\]
and consider the vector field $V=\phi(\rho)\frac{\partial}{\partial \rho}$. It is well known that $V$ is a conformal killing field. Let $M^n\subset \mathbb{N}^{n+1}(K)$ be a closed hypersurface and $\nu$ be the outward unit normal vector field. We call function $u=\langle V,\nu \rangle$ to be the generalized support function of the hypersurface.

Let $\kappa=\langle \kappa_1,\cdots, \kappa_n \rangle$ be the vector of $n$ principle curvatures of the hypersurface $M_0$ and we denote the $k$-th elementary symmetric function of $\kappa$ by $\sigma_k(\kappa).$ For space form $\mathbb{N}^{n+1},$ there exists a notion of quermassintegrals, which can be expressed as a family of curvature integrals using Cauchy-Cronfton formulas (e.g., \cite{17}).
\begin{definition}
Let $X:M\rightarrow \mathbb{N}^{n+1}(K)$ be a closed hypersurface embedded into $ \mathbb{N}^{n+1}(K)$ and $\sigma_k$ be the $k$-th elementary symmetric function of the second fundamental form. Suppose $\Omega$ is the domain enclosed by $M^n$ in $\mathbb{N}^{n+1}(K).$ The \textbf{$k$-th quermassintegal ${\mathcal A}_k$} is defined as follows:
\begin{align*}
&\mathcal{A}_{-1}(\Omega)=\Vol (\Omega),\\
&\mathcal {A}_0(\Omega)=\int_Md\mu_g,\\
&\mathcal {A}_1(\Omega)=\int_M\sigma_1d\mu_g+nK\Vol(\Omega),\\
&\mathcal {A}_k(\Omega)=\int_M\sigma_k d\mu_g+\frac{K(n-k+1)}{k-1}\mathcal{A}_{k-2}(\Omega),
\end{align*}
where $2\le k\le n.$
\end{definition}
In the hyperbolic space $\mathbb{H}^{n+1}$, there have been many interesting results. Brendle, Hung and Wang \cite{4} used the inverse mean curvature flow to obtain Minkowski type inequality with weighted factor (the relation between $\int_{M}\sigma_1\phi'$ and $\int_{\Omega}\phi'dvol$) for a compact mean convex hypersurface which is star-shaped with respect to the origin.
In the case of $h$-convexity, full range of quermassintegral inequalities were obtained in \cite{6,7} using expanding and contracting types of flows. Very recently, the results in \cite{7} for $h$-convex domains in $\mathbb{H}^{n+1}$ were reproved using flow (1.1)  directly in \cite{8} by establishing that $h$-convexity is preserved along flow (1.1). The sharp relation between $\mathcal{A}_2$ and $\mathcal{A}_0$ was previously proved in \cite{9} by a different method. Andrews, Chen and Wei \cite{21} replaced the $h$-convexity assumption with the weaker assumption of positive sectional curvature to obtain the relation between $\mathcal{A}_k$ and $\mathcal{A}_{-1}$ for $0\le k\le n-1.$
 In the general case of $k$-convexity, Brendle, Guan and Li \cite{5} considered the following parabolic evolution equation of a smooth one parameter family of embedded hypersurfaces $X(\cdot,t) \subset  \mathbb{N}^{n+1}(K),$
\begin{equation}
X_t=(\frac{\phi'(\rho)}{F}-\frac{u}{c_{n,k}})\nu,
\end{equation}
where $X$ is the position vector and $F:=\frac{\sigma_{k+1}(\kappa)}{\sigma_{k}(\kappa)}.$  The motivation to study Brendle-Guan-Li' flow is the Alexandrov-Fenchel type inequalities for quermassintegrals in space forms. If one establishes long time existence and convergence of the flow (1.1), the sharp Alexandrov-Fenchel inequalities for $k$-star-shaped domains in $\mathbb{N}^{n+1}(K)$ would follow. In the case $K=-1,$ the main problem is the preservation of star-shapedness along the flow. As a consequence, Brendle, Guan and Li \cite{5} established sharp inequalities between $\mathcal{A}_{n-1}$ and $\mathcal{A}_k$ for convex domains in $\mathbb{H}^{n+1}$, $\forall k\le n-1.$ For general $ 0\le k\le l\le n-1$ in  $\mathbb{H}^{n+1},$ with an extra initial gradient bound Condition $(\max_{x\in \mathbb{S}^n}|\nabla \ln(\cosh \rho)|^2(x,0)\le 12+3\min_{t=0} \sinh^2\rho)$, Brendle, Guan and Li \cite{5} can obtain the sharp inequalities of the relations between $\mathcal{A}_k$ and $\mathcal{A}_{l}$.

In $\mathbb{S}^{n+1},$ Gir$\tilde{a}o$ and Pinheiro \cite{10} used the inverse mean curvature flow to prove the Minkowski type inequality. Brendle, Guan and Li \cite{5} established sharp inequalities between $\mathcal{A}_{n-1}$ and $\mathcal{A}_k$ for convex domains in $\mathbb{S}^{n+1}$, $\forall k\le n-1.$
Wei and Xiong proved the following optimal inequalities for convex hypersurfaces in sphere.
\begin{theorem}
(see Theorem 1.3 in \cite{12})
Let $\Sigma^{n}$ be a closed and strictly convex hypersurface in $\mathbb{S}^{n+1}$. Then we have the following optimal inequalities $(k\le \frac{n}{2})$
\[\int_{\Sigma} L_kd\mu\ge C^{2k}_n(2k)!\omega_n^{\frac{2n}{k}}|\Sigma|^{\frac{n-2k}{n}}.\]
Equality holds if and only if $\Sigma$ is a geodesic sphere.
\end{theorem}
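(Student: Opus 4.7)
The plan is to deduce Theorem 1.2 via a locally constrained inverse curvature flow of Brendle-Guan-Li type (1.1) adapted to the integrand $L_k$, designed so that the area $|\Sigma|$ is preserved while $\int_\Sigma L_k d\mu$ is monotone, and then to evaluate the monotone quantity on the geodesic sphere to which the flow converges.

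First I would reduce $L_k$ to a combination of the $\sigma$'s. For $\Sigma^n\subset \mathbb{S}^{n+1}$, the Gauss equation reads $R_{ijkl} = g_{ik}g_{jl}-g_{il}g_{jk}+h_{ik}h_{jl}-h_{il}h_{jk}$. Contracting the $k$-th Gauss-Bonnet-Lovelock polynomial with the generalized Kronecker delta expands $L_k$ into an explicit binomial combination of the even-order symmetric functions $\sigma_{2j}(\kappa)$, $0\le j\le k$. Integrating this expansion and applying the recursion in Definition 1.1 re-expresses $\int_\Sigma L_k d\mu$ as a positive linear combination of the quermassintegrals $\mathcal{A}_{2j-1}$ for $1\le 2j-1\le 2k-1$. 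This packages the target inequality as a comparison between a sum of $\mathcal{A}_{2j-1}$'s and a power of $\mathcal{A}_0 = |\Sigma|$.

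Next I would evolve $\Sigma$ by
\[X_t = \Bigl(\frac{\phi'(\rho)}{F(\kappa)} - \frac{u}{c_{n,k}}\Bigr)\nu, \qquad F = \frac{\sigma_{2k}}{\sigma_{2k-1}},\]
with the constant $c_{n,k}$ calibrated so that $|\Sigma|$ is preserved along the flow. Computing $\frac{d}{dt}\int_\Sigma L_k d\mu$ via the standard evolution equations of $\sigma_j$ under a normal speed, together with the conformal Killing identity $\nabla_i V_j = \phi'(\rho) g_{ij}$ for $V=\phi(\rho)\partial_\rho$ and integration by parts, reduces the derivative to a sum of Newton-MacLaurin products in the pairs $(\sigma_{2j-1},\sigma_{2j})$. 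The inequalities $\sigma_{2j-1}\sigma_{2j+1}\le \sigma_{2j}^2$, applied termwise, then give a definite sign, with equality forcing all principal curvatures to agree pointwise.

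Long-time existence and convergence then demand preservation of strict convexity and uniform $C^2$ bounds. In $\mathbb{S}^{n+1}$ the ambient sectional curvature $+1$ contributes a favorable reaction term to the evolution of the smallest principal curvature $\kappa_1$, and a maximum principle applied at the infimum of $\kappa_1$ should yield $\kappa_1\ge c_0>0$ uniformly. Combined with the preserved area, this produces uniform $C^2$ bounds; Krylov-Safonov and Schauder estimates then give smooth exponential convergence to a geodesic sphere. Evaluating $\int L_k$ and $|\Sigma|$ on the limit sphere of radius $\rho_\infty$, where $\kappa_i\equiv\cot\rho_\infty$ and $|\Sigma_\infty| = \omega_n \sin^n\rho_\infty$, reproduces the explicit constant $C_n^{2k}(2k)!\omega_n^{2n/k}$, and the rigidity of the monotonicity step identifies the equality case.

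The main obstacle I anticipate is the preservation of convexity when $F=\sigma_{2k}/\sigma_{2k-1}$ is a higher-order quotient: the evolution equation for the Weingarten operator carries a commutator of the form $\ddot F(\nabla h,\nabla h)$ whose sign is not automatic, and the positive ambient curvature term must be exploited carefully at a minimum eigenvalue of the shape operator to dominate the bad higher-order contributions. Once this pinching is in hand, together with the preserved area, the remaining regularity and convergence steps are standard.
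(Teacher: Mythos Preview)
This theorem is not proved in the present paper; it is quoted from Wei--Xiong \cite{12} as background. The paper's own contribution is Theorem~1.4, the inequalities $\mathcal{A}_k\ge\xi_{k,k-2}(\mathcal{A}_{k-2})$, established via Gerhardt's inverse curvature flow together with the Chen--Guan--Li--Scheuer flow. There is therefore no proof here to compare your proposal against.

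Independently of that, your outline has substantive gaps. First, the flow $X_t=(\phi'/F - u/c)\nu$ with $F=\sigma_{2k}/\sigma_{2k-1}$ cannot be calibrated to preserve $|\Sigma|=\mathcal{A}_0$. By (2.5) one has $\frac{d}{dt}\mathcal{A}_0=\int f\sigma_1$, and with this $F$ the Minkowski identity (Proposition~2.3) does not match $\int\phi'\sigma_1\sigma_{2k-1}/\sigma_{2k}$ against $\int u\sigma_1$ for any constant $c$ unless $\Sigma$ is already umbilic; the quantity naturally preserved by this choice of $F$ with the standard constant is $\mathcal{A}_{2k-1}$, not area. Second, the parity in your expansion is off: $L_k$ is a combination of the even-order $\sigma_{2k-2i}$, and the recursion in Definition~1.1 expresses $\int\sigma_{2m}$ through $\mathcal{A}_{2m}$ and $\mathcal{A}_{2m-2}$, so $\int_\Sigma L_k$ is a combination of the even-indexed $\mathcal{A}_{2j}$, not the odd $\mathcal{A}_{2j-1}$ you claim. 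Third, long-time existence and convergence of flow~(1.1) for general curvature quotients is precisely the open issue the paper works around; it cannot be invoked as routine, and the convexity-preservation argument actually carried out in the paper (Lemma~3.2) is for the dual flow~(3.1), not for~(1.1).
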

Here, the Gauss-Bonnet curvature is
\[L_k=C^{2k}_n(2k)!\sum^k_{i=0}\frac{C^i_k}{C^{2k-2i}_n}(-1)^i\sigma_{2k-2i}.\]

Makowski and Scheuer proved the following Alexandrov-Fenchel type inequalities  in the spheres. Namely
\begin{theorem}
(see Theorem 7.6 in \cite{11} )
Let $M\subset \mathbb{S}^{n+1}$ be an embedded, closed connected and convex $C^2$-hypersurface of the sphere. Let $k\in \mathbb{N}_+$ with $2k+1\le n$ and let $\hat{M}$ be convex body enclosed by $M$. Then we have the inequality
\[W_{2k+1}(\hat{M})\ge \frac{\omega_n}{n+1}\sum^k_{i=0}(-1)^i\frac{n-2k}{n-2k+2i}\begin{pmatrix} k\\ i\end{pmatrix} (\frac{n+1}{\omega_n}W_1(\hat{M}))^{\frac{n-2k+2i}{n}},\]
and equality holds if and only if $M$ is a geodesic sphere.
\end{theorem}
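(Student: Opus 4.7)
The plan is to prove the inequality via a locally constrained inverse-curvature flow in the spirit of the Brendle--Guan--Li flow (1.1), tuned so that $W_1(\hat M_t)$ is preserved and $W_{2k+1}(\hat M_t)$ is monotone. Specifically, I would consider in $\mathbb{S}^{n+1}$ the flow
\[X_t=\left(\frac{\phi'(\rho)}{F(\kappa)}-c(t)\,u\right)\nu,\qquad \phi(\rho)=\sin\rho,\quad u=\langle V,\nu\rangle,\]
with $F$ a concave, degree-one, symmetric curvature function elliptic on the positive cone $\Gamma_+$ and adapted to $\sigma_{2k+1}$ (for instance $F=\sigma_{2k+1}^{1/(2k+1)}$), and $c(t)$ chosen so that $W_1(\hat M_t)$ is constant along the flow. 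Using the variational formula $\frac{d}{dt}\mathcal{A}_j=\int_M\sigma_{j+1}(\phi'/F-cu)\,d\mu$ together with the spherical Minkowski-type identities, one checks that such a $c(t)$ exists uniquely, and the Newton--MacLaurin inequality then forces $\frac{d}{dt}W_{2k+1}$ to have a definite sign.

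The main obstacle is preservation of convexity. On the sphere the ambient sectional curvature $K=1$ contributes a positive term to the Simons-type identity that can push principal curvatures below zero. One applies Andrews's tensor maximum principle to $h^i_j-\varepsilon\,\delta^i_j$, exploiting the concavity of $F$ and the fact that the $-c\,u\,\nu$ part of the speed yields a $+cu\,g_{ij}$ term in the evolution of $h_{ij}$ with the correct sign to absorb the positive curvature contribution; this is where the convexity hypothesis and the index constraint $2k+1\le n$ (needed for $\sigma_{2k+1}$ to be a nontrivial elliptic operator on $n$-dimensional hypersurfaces) really enter. Once convexity is preserved, $M_t$ is trapped inside a fixed geodesic ball of the open hemisphere $\mathbb{S}^{n+1}_+$; this gives a $C^0$ bound and a strictly positive lower bound on $u$, and $C^2$ estimates then follow from Krylov--Safonov--Evans applied to the concave operator $F$.

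Long-time existence and smooth convergence to a critical point of the flow then follow by standard compactness-plus-monotonicity. Critical points satisfy $\phi'/F=c\,u$, which combined with convexity and the spherical Minkowski identities forces the limit $M_\infty$ to be a geodesic sphere. Passing to the limit, $W_1(\hat M)=W_1(\hat M_\infty)$ and $W_{2k+1}(\hat M)\ge W_{2k+1}(\hat M_\infty)$, where $\hat M_\infty$ is the geodesic ball of the prescribed $W_1$-value. A direct computation of both quermassintegrals on geodesic spheres of radius $\rho$ in $\mathbb{S}^{n+1}$, using $\mathcal{A}_j(\Omega_\rho)=\omega_n\sin^{n-j}\rho\,\cos\rho+\text{(lower order, from the $K=1$ recursion in Definition 1.1)}$, and expanding $\cos\rho$ as a power series in $\sin\rho$, reproduces exactly the alternating sum on the right-hand side. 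The equality case follows because equality at $t=0$ forces the flow to be stationary from the start, so $M$ must itself be a geodesic sphere. The hardest and most delicate step throughout is the preservation of convexity against the positive ambient curvature, which governs the whole argument.
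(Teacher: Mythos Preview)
This theorem is \emph{not} proved in the present paper: it is quoted verbatim from Makowski--Scheuer \cite{11} as background (note the parenthetical ``see Theorem 7.6 in \cite{11}''), and no argument for it appears anywhere in the paper. So there is no ``paper's own proof'' to compare against. For the record, Makowski--Scheuer's approach is the pure inverse curvature flow $X_t=\nu/F$ of Gerhardt \cite{14}, which drives convex hypersurfaces in $\mathbb{S}^{n+1}$ to the equator; they obtain the inequality from a monotone scale-invariant quotient along that flow, not from a locally constrained flow.

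Your proposed route---a Brendle--Guan--Li type locally constrained flow with $F=\sigma_{2k+1}^{1/(2k+1)}$ and a time-dependent normalizing factor $c(t)$---is a genuinely different idea, but it has real gaps. First, the paper you are writing in explicitly flags that for the closely related Chen--Guan--Li--Scheuer flow (\ref{flow-CGLS}), ``$C^2$ estimates for solutions of flow (\ref{flow-CGLS}) is still an open question'' except in the quasilinear case $k=0$; your appeal to Krylov--Evans is therefore not available off the shelf, and this is precisely why the authors are forced to combine the CGLS flow (only for $k=0$) with Gerhardt's flow to prove their own Theorem~\ref{thm:1.2}. Second, the monotonicity of $W_{2k+1}$ under your flow is asserted but not shown: with $F=\sigma_{2k+1}^{1/(2k+1)}$ the variation $\frac{d}{dt}\mathcal{A}_{2k}=(2k+1)\int\sigma_{2k+1}(\phi'/F-c(t)u)\,d\mu$ does not reduce via a single Newton--MacLaurin step and a Minkowski identity to something of definite sign once $c(t)$ has been fixed by the condition $\frac{d}{dt}\mathcal{A}_0=0$; the standard monotonicity arguments rely on the specific quotient form $F=\sigma_{k+1}/\sigma_k$, not on a power of a single $\sigma_j$. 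Third, your convexity-preservation sketch is at the level of a heuristic; it is exactly this estimate (Lemma~\ref{convexity preserving} here, for the CGLS flow) that requires a priori control of $F$ from above, which in turn feeds back into the missing $C^2$ theory.

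In short: there is nothing in this paper to compare your proof to, and the flow you propose runs into the very $C^2$-estimate obstruction that the paper identifies as open.
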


By Proposition 7 in \cite{20}, we have
\begin{equation}\label{Wk}
W_{k+1}=\frac{1}{(n+1)\begin{pmatrix} n\\ k\end{pmatrix}}\mathcal{A}_k.
\end{equation}
In view of  $(\ref{Wk}),$ Theorem 1.3 implies the relation between $A_{2k}$ and $A_0$ in $\mathbb{S}^{n+1}$ (Ge, Wang and Wu also proved a similar type of inequalities in $\mathbb{H}^{n+1}$ in Theorem 1.3 in \cite{6} before).
The optimal inequalities between two quermasssintegrals $\mathcal{A}_k$ and $\mathcal{A}_l$ (for general $ 0\le k\le l\le n-1$) in $\mathbb{S}^{n+1}$ are still open.  We will derive the relation between two adjacent quermassintegrals for general even number (i.e., $A_{2k}$ and $A_{2k-2}$). We will also derive the relation between two adjacent quermassintegrals for general odd number (i.e., $A_{2k+1}$ and $A_{2k-1}$) as well.

 Assume that $s_k=\mathcal{A}_k(B_{\frac{\pi}{2}}(o)) ,$ we now state the main result of this paper:

\begin{theorem} \label{thm:1.2}
Let $M$ be a closed convex $C^2$-hypersurface in $\mathbb{S}_+^{n+1},$ then the following inequality holds,
\begin{equation}\label{def-Ak-Ak-2}
\mathcal{A}_k(\Omega)\ge \xi_{k,k-2}(\mathcal{A}_{k-2}(\Omega) )\qquad \text{for any}  \quad 1\le k\le n-1,
\end{equation}
where $\xi_{k,k-2}$ is the unique positive function defined on $(0,s_{k-2})$ such that the equality holds if and only if $M$ is a geodesic sphere.
\end{theorem}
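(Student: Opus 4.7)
The plan is to prove the inequality by running a curvature flow of Brendle--Guan--Li type, (1.1), tailored so that the quermassintegral $\mathcal{A}_{k-2}$ is preserved while $\mathcal{A}_k$ is monotone. More precisely, I would take the flow
\[
X_t = \left(\frac{\phi'(\rho)}{F} - \frac{u}{c_{n,k}}\right)\nu,
\qquad F = \frac{\sigma_{k}(\kappa)}{\sigma_{k-1}(\kappa)},
\]
starting from the given convex hypersurface $M \subset \mathbb{S}_+^{n+1}$, and choose the normalising constant $c_{n,k}$ so that $\frac{d}{dt}\mathcal{A}_{k-2}(\Omega_t)=0$. Using the standard first-variation formulas for the quermassintegrals $\mathcal{A}_j$, together with the Minkowski--Hsiung identity $\int_M \phi' \sigma_j \, d\mu = \int_M u\, \sigma_{j+1}\, d\mu$ in space forms, one then shows via a Newton--Maclaurin inequality that $\frac{d}{dt}\mathcal{A}_k(\Omega_t) \le 0$, with equality precisely on geodesic spheres.

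Next I would establish the a priori estimates needed to run the flow for all time. The first step is to keep the hypersurface inside the open hemisphere $\mathbb{S}_+^{n+1}$; this follows by a barrier argument using a nested family of geodesic spheres, together with the equatorial obstruction built into $\phi'(\rho)=\cos\rho$. The second and most delicate step is to \emph{preserve convexity}: I would differentiate the evolution of the Weingarten tensor along the flow and apply a tensor maximum principle, as in the arguments of Brendle--Guan--Li and Andrews--Chen--Wei. The $C^0$ bound then combines with convexity to yield a uniform $C^1$ bound, and standard Krylov--Safonov theory upgrades this to $C^{2,\alpha}$ estimates uniform in $t$, giving long-time existence.

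With long-time existence and uniform estimates in hand, the monotonicity of $\mathcal{A}_k$ and the preservation of $\mathcal{A}_{k-2}$ force convergence (up to a subsequence) to a smooth stationary convex hypersurface. Analysing the stationary equation $\phi'/F = u/c_{n,k}$ with the help of the Heintze--Karcher inequality or a direct integration-by-parts argument shows the limit must be a geodesic sphere $S_{\rho_\infty}$ of the appropriate radius. Defining $\xi_{k,k-2}(a)$ as the value of $\mathcal{A}_k$ on the geodesic sphere whose $\mathcal{A}_{k-2}$ equals $a$, the monotonicity of the radius map on $(0, s_{k-2})$ gives a well-defined positive function, and combining monotonicity along the flow with convergence yields
\[
\mathcal{A}_k(\Omega) \ge \mathcal{A}_k(\Omega_\infty) = \xi_{k,k-2}(\mathcal{A}_{k-2}(\Omega_\infty)) = \xi_{k,k-2}(\mathcal{A}_{k-2}(\Omega)),
\]
with equality forcing the flow to be stationary at $t=0$, hence $M$ a geodesic sphere.

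The main obstacle is the preservation of convexity in $\mathbb{S}^{n+1}$: unlike in Euclidean or hyperbolic space, the ambient curvature $K=+1$ contributes reaction terms of unfavourable sign in the evolution of the second fundamental form, so the tensor maximum principle must be applied carefully using the concavity of $F = \sigma_k/\sigma_{k-1}$ and the sign of $u$ on $\mathbb{S}_+^{n+1}$ (where $\phi' = \cos\rho > 0$). A secondary obstacle is showing that the a priori $C^0$ bound genuinely keeps the flow away from the equator, which is what fails in the hyperbolic analogue and is why the Brendle--Guan--Li program in $\mathbb{H}^{n+1}$ requires extra assumptions; here the hemisphere hypothesis together with the sign of $u$ should make this work, but it must be verified uniformly in time.
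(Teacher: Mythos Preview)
Your strategy via the Brendle--Guan--Li flow is the natural direct attack, but it has a genuine gap at the $C^2$ step. You write that ``standard Krylov--Safonov theory upgrades this to $C^{2,\alpha}$,'' but Krylov--Safonov requires a uniform $C^2$ bound as \emph{input}; it does not manufacture one. For the flow $X_t=(\phi'/F - u/c_{n,k})\nu$ with $F=\sigma_k/\sigma_{k-1}$ and general $1\le k\le n-1$ in the sphere, a curvature upper bound is not known, and this is precisely why the Brendle--Guan--Li program has so far delivered only the relations $\mathcal{A}_{n-1}$ versus $\mathcal{A}_k$ in $\mathbb{S}^{n+1}$. Convexity preservation, which you flag as the main obstacle, is in fact the more tractable half (a tensor maximum principle of the type in the paper's Lemma~3.2 handles the analogous constrained flow); the missing piece is the upper bound on $\kappa_{\max}$, and your outline offers no mechanism for it. Without it there is no long-time existence, and the argument does not close.

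The paper avoids this entirely by switching to Gerhardt's pure inverse curvature flow $X_t=(\sigma_{k-1}/\sigma_k)\nu$, for which long-time existence and smooth convergence to the equator are already established. Along that flow neither $\mathcal{A}_k$ nor $\mathcal{A}_{k-2}$ is individually preserved, so a direct monotonicity argument of your type is unavailable. Instead the proof runs by \emph{induction on $k$}: one derives an ODE for $\xi_{k,k-2}$ involving $\xi_{k-2,k-4}^{-1}$, and then shows---using Newton--Maclaurin together with the inductive hypothesis $\mathcal{A}_{k-4}(t)\le \xi_{k-2,k-4}^{-1}(\mathcal{A}_{k-2}(t))$---that $e^{-ct}\bigl(\mathcal{A}_k(t)-\xi_{k,k-2}(\mathcal{A}_{k-2}(t))\bigr)$ is nonincreasing and tends to $0$ at the equator. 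The base case $k=2$ uses the same Gerhardt flow; the base case $k=1$ requires a separate argument via the quasilinear flow $X_t=(n\phi'-uH)\nu$, where $C^2$ estimates genuinely \emph{do} follow from quasilinear PDE theory. Your approach, were the $C^2$ estimate available, would be clean and uniform in $k$; the paper's route is indirect and inductive but rests only on convergence results already in the literature.
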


We will prove the relations between $\mathcal{A}_{k}$ and $\mathcal{A}_{k-2}$ by induction. It is already known that hypersurfaces converge smoothly to the equator along the Gerhadt's flow (see flow (4.1)). In Section 4, under the assumption that $\mathcal{A}_{k-2}(\Omega)\ge \xi_{k-2,k-4}(\mathcal{A}_{k-4}(\Omega))$ (the equality holds if and only if $M$ is a geodesic sphere), we could apply Gerhardt's flow to prove that $\mathcal{A}_{k}(\Omega)\ge \xi_{k,k-2}(\mathcal{A}_{k-2}(\Omega))$(the equality holds if and only if $M$ is a geodesic sphere).  We can also use Gerhardt's flow to derive the relation between $\mathcal{A}_{2}$ and $\mathcal{A}_{0}$ as well. However, we are unable to use Gerhardt's flow to obtain the relation between $\mathcal{A}_{1}$ and $\mathcal{A}_{-1}$ as we are failed  to get enough information about $\xi_{1,-1}$ as we describe in (\ref{xi_{k,k-2}}) (see also (\ref{xi_{2,0}})) by the definition of $\xi_{k,k-2}$.  To derive the relation of $\mathcal{A}_1$ and $\mathcal{A}_{-1}$, we will use Chen-Guan-Li-Scheuer's flow (see flow (\ref{flow-CGLS})) in Section 3. A nice feature of this type of flow is that we do not need to know specific information about $\xi_{1,-1}$ and we just need to obtain the monotonicity property of $\mathcal{A}_{1}$ and $\mathcal{A}_{-1}$ along the flow to acquire the relation between $\mathcal{A}_{1}$ and $\mathcal{A}_{-1}$. More generally, we can prove the relation between $\mathcal{A}_m$ and $\mathcal{A}_{-1}$ for $0\le m\le n-1$.  Except in the case of $k=0$ (i.e., flow (\ref{k=0})) where $C^2$ estimates follow directly from the theory of quasi-linear PDE, $C^2$ estimates for solutions of flow $(\ref{flow-CGLS})$ is still an open question. As a result, combining applying Gerhardt's flow with Chen-Guan-Li-Scheuer's flow together, we could establish a full range of  Alexandrov-Fenchel inequalities described in Theorem \ref{thm:1.2}.

The subsequent sections of this paper are organized as follows: in Section 2, we will recall some general facts on the elementary symmetric functions and the quermassintegrals; in Section 3, we will use Chen-Guan-Li-Scheuer's flow to prove the the relation between ${\mathcal A}_m$ and ${\mathcal A}_{-1}$; in Section 4, we will use Gerhardt's flow to finish the proof of the main theorem.

\vspace{.1in}

\section{Setting and general facts}
Let us present some basic facts which will be used later in this paper.
\begin{definition}(\cite{18})
For $1\le k\le n,$ let $\Gamma_k$ be a cone in $\mathbb{R}^n$ determined by
\[\Gamma_k=\{\lambda \in \mathbb{R}^n: \sigma_1>0,\cdots, \sigma_k>0\}.\]
An $n\times n$ symmetric matrix $W$ is called belonging to $\Gamma_k$ if $\lambda(W)\in \Gamma_k.$
\end{definition}
Then we will introduce Newton-Maclaurin inequalities and Minkowski identity.
\begin{lemma}\label{lemma-NM}(\cite{18})
For $W\in \Gamma_k,$
\[(n-k+1)(k+1)\sigma_{k-1}(W)\sigma_{k+1}(W)\le k(n-k)\sigma_k^2(W),\]
and
\[\sigma_{k+1}(W)\le c_{n,k}\sigma_k^{\frac{k+1}{k}}(W),\]
where $c_{n,k}=\frac{\sigma_{k}^{\frac{k+1}{k}}}{\sigma_{k+1}}(I).$ The equality holds if and only if $W=cI$ for some $c>0.$
\end{lemma}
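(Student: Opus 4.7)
The plan is to reduce both inequalities to the classical Newton and Maclaurin inequalities for tuples of real numbers, which follow from Rolle's theorem applied to a suitable polynomial. Since $\sigma_j(W)$ depends only on the eigenvalues $\lambda=(\lambda_1,\dots,\lambda_n)$ of $W$, it is enough to prove the inequalities for $\lambda\in\Gamma_k\subset\mathbb{R}^n$. Normalizing by $p_j:=\sigma_j/\binom{n}{j}$, the direct computation
\[
\frac{\binom{n}{k}^{2}}{\binom{n}{k-1}\binom{n}{k+1}} = \frac{(k+1)(n-k+1)}{k(n-k)}
\]
shows that the first inequality of the lemma is equivalent to Newton's inequality $p_{k-1}\,p_{k+1}\le p_k^{2}$.

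I would prove Newton's inequality by considering $P(t)=\prod_{i=1}^{n}(1+\lambda_i t)$; since its roots $-1/\lambda_i$ are real, Rolle's theorem gives that every derivative $P^{(j)}$ also has only real roots. Differentiating $P$ a total of $n-k-1$ times produces a quadratic in $t$ whose coefficients are, up to a common factor, $p_{k-1}$, $p_k$, and $p_{k+1}$; its non-negative discriminant is precisely $p_k^{2}-p_{k-1}\,p_{k+1}\ge 0$, and equality forces the discriminant to vanish, which in turn forces all the $\lambda_i$ to coincide. Iterating this along the chain
\[
p_1\ge p_2^{1/2}\ge\cdots\ge p_{k+1}^{1/(k+1)},
\]
which is meaningful because $\lambda\in\Gamma_k$ guarantees $p_j>0$ for all $j\le k$, yields the Maclaurin inequality $p_{k+1}\le p_k^{(k+1)/k}$. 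Rescaling back to $\sigma_j$ produces $\sigma_{k+1}\le c_{n,k}\,\sigma_k^{(k+1)/k}$, with the constant $c_{n,k}$ pinned down by the normalization that equality must hold at $W=I$.

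In both inequalities, equality forces all $\lambda_i$ to be equal, and the G\r{a}rding condition $\lambda\in\Gamma_k$ then pins the common value as some $c>0$, so $W=cI$. The only real obstacle is ensuring positivity throughout the Maclaurin chain to make the $(k+1)$-th roots meaningful; this is handled by G\r{a}rding's containment $\Gamma_k\subset\Gamma_j$ for $j\le k$, which is itself a consequence of the Newton inequality proved above, so the two parts of the lemma genuinely need to be established together (or in the natural order: first Newton, then cone containment, then Maclaurin by iteration). The remainder of the argument is bookkeeping with binomial coefficients, and matches the derivation given in \cite{18}.
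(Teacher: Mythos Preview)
The paper does not prove this lemma; it is simply quoted from \cite{18} as a known result, so there is no in-paper proof to compare against. Your overall strategy---reduce to the eigenvalue vector, prove Newton's inequality $p_{k-1}p_{k+1}\le p_k^2$ via real-rootedness and Rolle's theorem, then iterate to obtain Maclaurin---is the classical route and is correct in outline, and your handling of the positivity needed for the Maclaurin chain (via $\Gamma_k\subset\Gamma_j$) is accurate.

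There is, however, a concrete slip in the Rolle step. With $P(t)=\prod_{i}(1+\lambda_i t)=\sum_{j}\sigma_j t^{j}$, differentiating $n-k-1$ times yields a polynomial of degree $k+1$, not a quadratic, and its surviving coefficients involve $\sigma_{n-k-1},\dots,\sigma_n$, not $\sigma_{k-1},\sigma_k,\sigma_{k+1}$. To isolate three \emph{consecutive} normalized coefficients $p_{k-1},p_k,p_{k+1}$ in a quadratic you must combine differentiation with the reversal $Q(t)\mapsto t^{\deg Q}Q(1/t)$ (differentiate $k-1$ times, reverse, then differentiate $n-k-1$ times, for a total of $n-2$ derivatives), or equivalently apply $\partial_s^{\,n-k-1}\partial_t^{\,k-1}$ to the homogeneous form $\prod_i(s+\lambda_i t)$. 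Your equality analysis is also incomplete: vanishing of the discriminant of that final quadratic tells you only that the original $P$ has a repeated root, not that all $\lambda_i$ coincide; one must trace multiplicities back through the entire chain of Rolle applications (or invoke a separate sum-of-squares identity) to conclude $\lambda_1=\cdots=\lambda_n$. Both issues are standard and repairable, but as written the differentiation count is wrong and the equality case is asserted rather than proved.
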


\begin{proposition}\label{prop-Min}(\cite{5})
Let $M$ be a closed hypersurfaces in $\mathbb{N}^{n+1}(K).$ Then, for $k=0,1,\cdots,n-1,$
\[(k+1)\int_M\sigma_{k+1}u=(n-k)\int_M\phi'(\rho)\sigma_k(\kappa).\]
We will use the convention that $\sigma_0\equiv 1.$
\end{proposition}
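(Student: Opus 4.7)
The plan is to derive the identity from the conformal Killing property of $V=\phi(\rho)\partial_\rho$ together with the fact that Newton tensors of the second fundamental form are divergence-free on a hypersurface in a space form, and then apply the divergence theorem.

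First I would verify that $\nabla_X V=\phi'(\rho)X$ for every vector field $X$ on $\mathbb{N}^{n+1}(K)$. A direct computation of Christoffel symbols in the warped-product metric $ds^2=d\rho^2+\phi^2(\rho)dz^2$ gives $\nabla_{\partial_\rho}\partial_\rho=0$ and $\nabla_{Y}\partial_\rho=(\phi'/\phi)Y$ for $Y$ tangent to a $\rho$-level sphere; hence $\nabla_{\partial_\rho}V=\phi'\partial_\rho$ and $\nabla_Y V=\phi' Y$, and by linearity the claim follows. In particular $V$ is a conformal Killing field with conformal factor $\phi'$.

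Next, restrict to $M$ and decompose $V=V^T+u\nu$ with $u=\langle V,\nu\rangle$. Using the Weingarten relation $\langle\nabla_X\nu,Y\rangle=h(X,Y)$ together with the previous step, a short computation shows that the tangential covariant derivative of $V^T$ on $M$ is
\begin{equation*}
\nabla_i V^T_j=\phi'(\rho)\,g_{ij}-u\,h_{ij}.
\end{equation*}
Now introduce the $k$-th Newton tensor $(T_k)^{ij}:=\partial\sigma_{k+1}/\partial h_{ij}$ associated to the second fundamental form. Two standard algebraic facts hold: $(T_k)^{ij}g_{ij}=(n-k)\sigma_k$ and $(T_k)^{ij}h_{ij}=(k+1)\sigma_{k+1}$. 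In addition, because the ambient space is a space form, the Codazzi equation reduces to $\nabla_i h_{jk}=\nabla_j h_{ik}$, i.e.\ $\nabla h$ is fully symmetric in its three indices; a standard induction on $k$ then yields the key differential identity $\nabla_i(T_k)^{ij}=0$.

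Finally, contract $(T_k)^{ij}$ against the formula for $\nabla_i V^T_j$ and integrate over the closed hypersurface $M$. Using the divergence-free property of $T_k$,
\begin{equation*}
0=\int_M \nabla_i\bigl((T_k)^{ij}V^T_j\bigr)\,d\mu=\int_M (T_k)^{ij}\nabla_i V^T_j\,d\mu=\int_M\bigl[(n-k)\phi'(\rho)\sigma_k-(k+1)u\,\sigma_{k+1}\bigr]d\mu,
\end{equation*}
and rearranging gives the desired Minkowski identity. This calculation is classical and poses no serious obstacle; the only point that deserves care is the vanishing of $\nabla_i(T_k)^{ij}$, which relies crucially on the ambient sectional curvature being constant so that the mixed component $R_{ijk\nu}$ of the ambient Riemann tensor vanishes. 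The cases $k=0$ and the convention $\sigma_0\equiv 1$ are covered automatically, since $T_0=\mathrm{Id}$ is trivially divergence-free.
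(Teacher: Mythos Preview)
Your proof is correct and follows the standard route to the Minkowski identities in space forms. Note, however, that the paper does not supply its own proof of this proposition: it is simply quoted from \cite{5}. So there is no ``paper's proof'' to compare against.

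That said, your argument dovetails perfectly with the surrounding material. The identity $\nabla_i V^T_j=\phi'(\rho)g_{ij}-u\,h_{ij}$ you derive is exactly Lemma~2.4 of the paper (since $V^T=\nabla\Phi$ on $M$), and the divergence-free property of the Newton tensor in a space form is precisely the mechanism that turns that Hessian formula into an integral identity. Your remark that the vanishing of $\nabla_i(T_k)^{ij}$ hinges on $R_{ijk\nu}=0$ is the only delicate point, and you have identified it correctly.
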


We define
\begin{equation}\label{Phi}
\Phi(\rho)=\int^{\rho}_{0}\phi(r)dr.
\end{equation}
Then $\Phi(\rho)$ is $\frac{\rho^2}{2}, \cosh \rho-1, 1-\cos \rho$ for $K=0,-1$ and 1, respectively.
The following lemma holds for general warped product manifolds.
\begin{lemma}(\cite{15})
Let $M^n\subset N^{n+1}$ be a closed hypersurface with induced metric $g$. Let $\Phi$ be defined as in (\ref{Phi}) and $V=\phi(\rho)\frac{\partial}{\partial \rho}$. Then $\Phi |_M$ satisfies,
\[\nabla_i\nabla_j \Phi=\phi'(\rho)g_{ij}-h_{ij}\langle V,\nu \rangle,\]
where $\nabla$ is the covariant derivative with respect to $g,$ $\nu$ is the outward unit normal, and $h_{ij}$ is the second fundamental form of the hypersurface.
\end{lemma}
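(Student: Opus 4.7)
The plan is to compute the ambient Hessian $\bar\nabla^2 \Phi$ on $N^{n+1}$ and then push the identity down to $M$ via the Gauss formula.

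The first step is to identify $\bar\nabla \Phi$. Since $\Phi'(\rho) = \phi(\rho)$ and $\partial_\rho$ is the unit ambient gradient of $\rho$ in the warped product metric $d\rho^2 + \phi^2(\rho)\, dz^2$, we have $\bar\nabla \Phi = \phi(\rho)\,\partial_\rho = V$. The key intermediate claim is that $V$ is a closed conformal Killing field satisfying $\bar\nabla_X V = \phi'(\rho)\, X$ for every vector field $X$ on $N$. This is verified by a direct warped-product computation: on the $\partial_\rho$ direction, $\bar\nabla_{\partial_\rho} V = \phi'(\rho)\,\partial_\rho$; for $Y$ tangent to a $\{\rho = \text{const}\}$ slice, the standard warped-product identity $\bar\nabla_Y \partial_\rho = (\phi'/\phi)\, Y$ gives $\bar\nabla_Y V = Y(\phi)\,\partial_\rho + \phi\,\bar\nabla_Y \partial_\rho = \phi'(\rho)\, Y$. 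Consequently, $\bar\nabla^2 \Phi(X,Y) = \langle \bar\nabla_X V, Y\rangle = \phi'(\rho)\,\langle X, Y\rangle$, i.e., $\bar\nabla^2 \Phi = \phi'(\rho)\,\bar g$ on $N$.

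The second step uses the Gauss formula $\bar\nabla_{e_i} e_j = \nabla_{e_i} e_j + h_{ij}\,\nu$ for a local frame $\{e_i\}$ tangent to $M$, together with the outward normal $\nu$. For any $f \in C^\infty(N)$ restricted to $M$, one obtains
\[
\nabla_i \nabla_j (f|_M) = e_i(e_j f) - (\nabla_{e_i} e_j)(f) = \bar\nabla^2 f(e_i, e_j) - h_{ij}\,\langle \bar\nabla f, \nu\rangle.
\]
Applying this with $f = \Phi$, substituting $\bar\nabla \Phi = V$ and $\bar\nabla^2 \Phi = \phi'(\rho)\,\bar g$, and using that $g_{ij} = \bar g(e_i, e_j)$ on tangent vectors yields exactly
\[
\nabla_i \nabla_j \Phi = \phi'(\rho)\, g_{ij} - h_{ij}\,\langle V, \nu\rangle,
\]
which is the claimed formula.

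There is no genuine obstacle here: the only points that require care are the sign convention in the Gauss formula (fixed by the choice of outward unit normal, matching the convention used for $h_{ij}$ elsewhere in the paper) and the verification that $V$ is a gradient conformal Killing field on the warped product, both of which are routine. No $k$-convexity, star-shapedness, or curvature positivity hypotheses on $M$ enter the argument, which is consistent with the remark in the excerpt that the lemma holds in general warped product ambients.
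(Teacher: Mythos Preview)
Your argument is correct and is exactly the standard derivation: identify $\bar\nabla\Phi=V$, verify the warped-product identity $\bar\nabla_X V=\phi'(\rho)X$ so that $\bar\nabla^2\Phi=\phi'(\rho)\bar g$, and then descend to $M$ via the Gauss formula. The paper itself does not supply a proof of this lemma; it simply quotes the result from \cite{15}, where the computation is carried out along the same lines you describe.
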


We also recall the gradient and hessian of the support function $u:=\langle V,\nu\rangle$ under the induced metric $g$ on $M$.
\begin{lemma}(\cite{15})
The support function $u$ satisfies
\begin{align*}
&\nabla_i u=h_{il}\nabla_l\Phi;\\
&\nabla_i\nabla_j u=\nabla_l h_{ij}\nabla_l \Phi+\phi' h_{ij}-u(h^2)_{ij},
\end{align*}
where $(h^2)_{ij}:=g^{kl}h_{ik}h_{jl}.$
\end{lemma}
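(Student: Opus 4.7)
The plan is to exploit two structural facts about the warped product $ds^2 = d\rho^2 + \phi^2(\rho)dz^2$: first, that $V = \phi(\rho)\partial_\rho$ is precisely the ambient gradient of $\Phi$, namely $\bar\nabla \Phi = V$; and second, that $V$ is a closed conformal vector field satisfying the strong pointwise identity $\bar\nabla_X V = \phi'(\rho)\, X$ for every ambient vector $X$. Combined with the Weingarten formula $\bar\nabla_{e_i}\nu = h_i{}^k e_k$ and the Hessian formula for $\Phi|_M$ already recorded in the previous lemma, these two inputs will yield both identities via direct differentiation, with the Codazzi equation (valid in any space form) serving to symmetrize the cubic term $\nabla_i h_{jk}$.

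For the first identity, I would differentiate $u = \langle V,\nu\rangle$ along a tangent vector $e_i$ using the ambient Levi-Civita connection:
\begin{equation*}
\nabla_i u \;=\; \langle \bar\nabla_{e_i} V,\,\nu\rangle + \langle V,\,\bar\nabla_{e_i}\nu\rangle.
\end{equation*}
The first term vanishes because $\bar\nabla_{e_i}V = \phi'(\rho)e_i \perp \nu$. For the second, Weingarten gives $\bar\nabla_{e_i}\nu = h_i{}^k e_k$, so I only need to identify $\langle V,e_k\rangle$ with $\nabla_k\Phi$. This identification is immediate from $V = \bar\nabla \Phi$ and the fact that tangential components of an ambient gradient restrict to the intrinsic gradient of the restricted function. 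Combining these gives $\nabla_i u = h_{il}\nabla_l \Phi$ (with the standard convention that one index is raised by $g$).

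For the Hessian identity, I would differentiate the first identity once more:
\begin{equation*}
\nabla_i\nabla_j u \;=\; \nabla_i\!\left(h_{jl}\nabla_l \Phi\right) \;=\; (\nabla_i h_{jl})\,\nabla_l \Phi \,+\, h_{jl}\,\nabla_i\nabla_l \Phi.
\end{equation*}
The previous lemma supplies $\nabla_i\nabla_l \Phi = \phi'(\rho)\,g_{il} - u\,h_{il}$, so the second term becomes $\phi'(\rho)h_{ij} - u(h^2)_{ij}$ after tracing against $h_{jl}$. For the first term, I invoke the Codazzi equation, which in the space form $\mathbb{N}^{n+1}(K)$ reads $\nabla_i h_{jl} = \nabla_l h_{ij}$ because the ambient curvature contribution $K(g_{ik}\langle e_j,\nu\rangle - g_{jk}\langle e_i,\nu\rangle)$ vanishes on tangent vectors. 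Renaming the summation index then puts the first term into the stated form $\nabla_l h_{ij}\nabla_l\Phi$.

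There is no serious obstacle here; the lemma is essentially a bookkeeping consequence of the closed conformal Killing property of $V$ together with Codazzi in a space form. The only points requiring care are the consistent raising and lowering of indices (the stated formulas use the convention where $\nabla_l\Phi$ sums against $h_{il}$ with an implicit metric contraction) and the verification that $\bar\nabla_X V = \phi'X$, which I would justify in one line from the warped product connection formulas $\bar\nabla_{\partial_\rho}\partial_\rho = 0$ and $\bar\nabla_{\partial_{z^a}}\partial_\rho = (\phi'/\phi)\partial_{z^a}$.
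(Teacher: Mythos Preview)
Your argument is correct. Note that the paper itself does not supply a proof of this lemma; it simply cites \cite{15} (Guan--Li). Your derivation is precisely the standard one that appears in that reference: use $\bar\nabla_X V=\phi' X$ and Weingarten to get the gradient identity, then differentiate once more and apply the previous lemma for $\nabla_i\nabla_j\Phi$ together with Codazzi (which in a space form makes $\nabla h$ totally symmetric) to obtain the Hessian identity. There is nothing to compare beyond this, as the paper defers entirely to the cited source.
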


Next we will present the evolution equations of $\sigma_l$ and quermassintegrals.
Let $M(t)$ be a smooth family of closed hyersurfaces in $\mathbb{N}^{n+1}(K).$ Let $X(\cdot,t)$ denote a point on $M(t).$  We will consider the flow
\begin{equation}\label{f}
X_t=f\nu.
\end{equation}
Along this flow, we have

\begin{proposition} \label{evolu equa} (\cite{5})
Under the flow (\ref{f}) for the hypersurface in a Riemannian manifold, suppose $\Omega$ is the domain enclosed by the closed hypersurface, we have the following evolution equations.
\begin{align*}
\partial_t g_{ij}&=2fh_{ij}\\
\partial_t h_{ij}&=-\nabla_i\nabla_j f+f(h^2)_{ij}-fR_{\nu ij \nu}\\
\partial_t h^j_i&=-g^{jk}\nabla_k\nabla_j f-g^{jk}f(h^2)_{ki}-fg^{jk}R_{\nu ik \nu}\\
\partial_t \sigma_k& =\frac{\partial \sigma_k}{\partial h^j_i}\partial_t h^j_i
\end{align*}
Moreover, if $N$ has constant sectional curvature $K$, then for $l\ge 0,$ we have
\begin{equation}\label{evolution of sigma}
\partial_t \int_M\sigma_l=\int_M f[(l+1)\sigma_{l+1}-(n-l+1)K\sigma_{l-1}]d\mu_g,
\end{equation}
and
\[\partial_t \Vol(\Omega)=\int_M fd\mu_g.\]
\end{proposition}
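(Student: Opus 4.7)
The plan is to verify each formula by direct differentiation in a local parametrization $X(\cdot,t)$ of $M(t)$, using the structure equations of hypersurfaces in $N^{n+1}(K)$ and integrating by parts against Newton tensors.

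\textbf{Metric.} Differentiate $g_{ij}=\langle \partial_i X,\partial_j X\rangle$ in $t$. Since $\partial_t X=f\nu$ and $t$-derivatives commute with coordinate derivatives, $\partial_t(\partial_i X)=(\partial_i f)\nu+f\,\partial_i\nu$. The Weingarten relation converts $\partial_i\nu$ into a tangential combination of the $\partial_k X$; taking the inner product with $\partial_j X$ and symmetrizing yields $\partial_t g_{ij}=2fh_{ij}$.

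\textbf{Second fundamental form.} For $\partial_t h_{ij}$, start from $h_{ij}=\langle \bar\nabla_{\partial_i}\partial_j X,\nu\rangle$ with $\bar\nabla$ the Levi-Civita connection of $N$. Differentiating in $t$ and commuting $\bar\nabla_t$ past $\bar\nabla_{\partial_i}$ generates an ambient curvature term. The identity $\partial_t\nu=-\nabla f$ (which follows from preserving $|\nu|^2=1$ and $\langle\nu,\partial_i X\rangle=0$) then leads to
\[
\partial_t h_{ij}=-\nabla_i\nabla_j f+f(h^2)_{ij}-fR_{\nu i j\nu}.
\]
The evolution of $h^j_i$ follows by raising an index via $\partial_t g^{ij}=-2fh^{ij}$, and the chain rule applied to $\sigma_k=\sigma_k(h^i_j)$ gives $\partial_t\sigma_k$ immediately.

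\textbf{Integrated evolution.} First note $\partial_t d\mu_g=\tfrac{1}{2} g^{ij}\partial_t g_{ij}\,d\mu_g=f\sigma_1\,d\mu_g$. Writing $T^{ij}_{l-1}:=\partial\sigma_l/\partial h^j_i$ for the $(l-1)$-th Newton transformation, the integrand becomes $T^{ij}_{l-1}\partial_t h^i_j+f\sigma_1\sigma_l$. In $N^{n+1}(K)$ with constant sectional curvature, $R_{\nu i j\nu}=-K g_{ij}$, and by the Codazzi equation $T^{ij}_{l-1}$ is divergence-free. Integrating by parts twice annihilates the $-T^{ij}_{l-1}\nabla_i\nabla_j f$ term, and the classical algebraic identities
\[
T^{ij}_{l-1}g_{ij}=(n-l+1)\sigma_{l-1},\qquad T^{ij}_{l-1}(h^2)_{ij}=\sigma_1\sigma_l-(l+1)\sigma_{l+1},
\]
cause $f\sigma_1\sigma_l$ to cancel and produce formula (\ref{evolution of sigma}). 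The volume evolution is direct from the first variation of volume: $\partial_t\Vol(\Omega)=\int_M\langle X_t,\nu\rangle\,d\mu_g=\int_M f\,d\mu_g$.

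\textbf{Main obstacle.} Everything reduces to a classical computation, but there are two delicate points. First, the derivation of $\partial_t h_{ij}$ requires tracking the ambient curvature contribution that appears when commuting $\bar\nabla_t$ past a tangential covariant derivative, and the sign conventions must be kept consistent with the definitions of $h_{ij}$ and $\nu$. Second, the integrated formula genuinely needs the constant-curvature hypothesis on $N$: it is precisely what ensures the divergence-freeness of $T^{ij}_{l-1}$ used in the integration by parts and what reduces $R_{\nu i j\nu}$ to a multiple of $g_{ij}$, so that the $\sigma_{l-1}$ term emerges with the clean coefficient $(n-l+1)K$.
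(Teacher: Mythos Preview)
The paper does not actually prove this proposition; it is quoted from \cite{5} and stated without argument. Your outline is the standard derivation and is essentially what one finds in the cited sources, so in spirit it matches what the paper relies on.

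There is one concrete slip. With the evolution equation for $h_{ij}$ as written in the statement, namely $\partial_t h_{ij}=-\nabla_i\nabla_j f+f(h^2)_{ij}-fR_{\nu ij\nu}$, the convention in use forces $R_{\nu ij\nu}=Kg_{ij}$ in $N^{n+1}(K)$, not $-Kg_{ij}$. Indeed, from $\partial_t h^j_i=-\nabla^j\nabla_i f-f(h^2)^j_i-fg^{jk}R_{\nu ki\nu}$ one contracts with the Newton tensor $(T_{l-1})^i_j$ and, after adding the area-element term $f\sigma_1\sigma_l$ and integrating by parts, obtains
\[
\partial_t\int_M\sigma_l=\int_M f\Big[(l+1)\sigma_{l+1}-(T_{l-1})^i_j\,g^{jk}R_{\nu ki\nu}\Big]d\mu_g.
\]
Your sign $R_{\nu ij\nu}=-Kg_{ij}$ would yield $+(n-l+1)K\sigma_{l-1}$ here, contradicting (\ref{evolution of sigma}). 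So either adjust the curvature sign to $R_{\nu ij\nu}=Kg_{ij}$, or, if you prefer the other Riemann tensor convention, rewrite the $h_{ij}$ evolution accordingly; the two must be consistent. Apart from this, your use of $\partial_t d\mu_g=f\sigma_1\,d\mu_g$, divergence-freeness of $T_{l-1}$ under Codazzi in constant curvature, and the trace identities $\mathrm{tr}\,T_{l-1}=(n-l+1)\sigma_{l-1}$, $T_{l-1}^{ij}(h^2)_{ij}=\sigma_1\sigma_l-(l+1)\sigma_{l+1}$ is exactly right and gives the formula cleanly.
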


Using Proposition \ref{evolu equa}, we have the following proposition, which motivates the definition of the quermassintegrals:

\begin{proposition} (\cite{5}) In $\mathbb{N}^{n+1}(K),$ along the flow (\ref{f}) for $0\le l<n-1,$ we have
\begin{equation}\label{evolu equa A_k}
\partial_t \mathcal{A}_l=(l+1)\int_Mf\sigma_{l+1}.
\end{equation}
\end{proposition}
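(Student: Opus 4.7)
The plan is to prove the evolution formula $\partial_t\mathcal{A}_l=(l+1)\int_M f\sigma_{l+1}\,d\mu_g$ by induction on $l$, exploiting the two-step recursion $\mathcal{A}_l=\int_M\sigma_l\,d\mu_g+\frac{K(n-l+1)}{l-1}\mathcal{A}_{l-2}$ from Definition~1.1. Because the recursion jumps by two, I will handle even and odd $l$ separately, with $l=0$ and $l=1$ as base cases.

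For the base cases, I would first treat $l=0$. By Proposition~2.5, $\partial_t g_{ij}=2fh_{ij}$, so $\partial_t(d\mu_g)=f\sigma_1\,d\mu_g$, which is exactly $(0+1)\int_M f\sigma_1\,d\mu_g$; equivalently, this follows by specializing (\ref{evolution of sigma}) at $l=0$ using $\sigma_{-1}\equiv 0$. For $l=1$, I would differentiate $\mathcal{A}_1=\int_M\sigma_1\,d\mu_g+nK\,\Vol(\Omega)$ directly, using (\ref{evolution of sigma}) with $l=1$ and $\partial_t\Vol(\Omega)=\int_M f\,d\mu_g$: the term $-(n-1+1)K\sigma_0\cdot f=-nKf$ coming from the first piece cancels the $+nKf$ coming from the volume term, leaving $2\int_M f\sigma_2\,d\mu_g$, as desired.

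For the inductive step, assume $\partial_t\mathcal{A}_{l-2}=(l-1)\int_M f\sigma_{l-1}\,d\mu_g$. Differentiating the recursion gives
\begin{align*}
\partial_t\mathcal{A}_l
&=\int_M f\bigl[(l+1)\sigma_{l+1}-(n-l+1)K\sigma_{l-1}\bigr]d\mu_g
+\frac{K(n-l+1)}{l-1}\,\partial_t\mathcal{A}_{l-2}\\
&=(l+1)\int_M f\sigma_{l+1}\,d\mu_g-(n-l+1)K\int_M f\sigma_{l-1}\,d\mu_g\\
&\qquad+\frac{K(n-l+1)}{l-1}\cdot(l-1)\int_M f\sigma_{l-1}\,d\mu_g,
\end{align*}
and the last two terms cancel exactly, leaving $(l+1)\int_M f\sigma_{l+1}\,d\mu_g$. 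This closes the induction.

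There is no real analytic obstacle here; the content is bookkeeping, and the only point requiring care is making sure the coefficient $\frac{K(n-l+1)}{l-1}$ in the recursive definition conspires with the curvature correction $-(n-l+1)K\sigma_{l-1}$ produced by (\ref{evolution of sigma}) so that the $\sigma_{l-1}$-contributions cancel. That cancellation is in fact the reason the quermassintegrals are defined with precisely those constants, so one expects (and gets) the clean answer $(l+1)\int_M f\sigma_{l+1}$ independent of $K$. The restriction $l<n-1$ ensures that $\sigma_{l+1}$ is well-defined as an elementary symmetric function on an $n$-dimensional hypersurface (one in fact needs only $l+1\le n$, i.e.\ $l\le n-1$, but the statement is phrased with the strict inequality).
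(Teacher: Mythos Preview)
Your proof is correct and follows exactly the approach the paper signals by the sentence ``Using Proposition~\ref{evolu equa}, we have the following proposition'': the paper does not spell out the argument (it cites \cite{5}), but the intended route is precisely to combine the evolution formula (\ref{evolution of sigma}) with the recursive Definition~1.1 so that the $\sigma_{l-1}$-terms cancel, which is what your induction does.
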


Let $B_{\rho}(o)\subset \mathbb{N}^{n+1}(K)$ be the geodesic ball of radius $\rho$ centered at the origin $o$. Then we have
\[\frac{d}{d\rho}(\mathcal{A}_k(B_{\rho}(o)))=(k+1)\int_{\partial B_{\rho}(o)}\sigma_{k+1}>0,\]
for any $k=-1,0,1,2,\cdots,n-1$. If we view $\mathcal{A}_k(B_{\rho}(o))$ as a function of $\rho$, then the inverse function can be denoted as
\[\rho=\eta_k(\mathcal{A}_k(B_{\rho}(o))),\]
where $\eta_k : (0,s_k)\rightarrow (0,\pi/2)$ is a strictly increasing function for any fixed $k$.   Brendle, Guan and Li \cite{5} want to compare the relation between $\mathcal{A}_k$ and $\mathcal{A}_{l}$ for given $k>l$ and balls $B_{\rho}(o), \rho>0$ in $\mathbb{N}^{n+1}(K),$  which are the optimal solutions of this isoperimetric problem. Let $\xi_{k,l}$ be the unique positive strictly increasing function defined on $(0,s_l)$ such that
\begin{equation}\label{xi_{k,l}}
\mathcal{A}_{k}(B_{\rho}(o))=\xi_{k,l}(\mathcal{A}_{l}(B_{\rho}(o))).
\end{equation}
In general, for a bounded domain $\Omega \subset \mathbb{N}^{n+1}(K),$  they want to establish
\begin{equation}
\mathcal{A}_{k}(\Omega)\ge \xi_{k,l}(\mathcal{A}_{l}(\Omega)).
\end{equation}
In this following part of this paper, we will prove the relation between $\mathcal{A}_{k}$ and $\mathcal{A}_{k-2}$. More precisely, we will show that
 \[\mathcal{A}_{k}(\Omega)\ge \xi_{k,k-2}(\mathcal{A}_{k-2}(\Omega)).\]

 \vspace{.1in}

\section{Chen-Guan-Li-Scheuer's flow and applications}
In this section, we will establish the relation between $\mathcal{A}_1(\Omega)$ and $\mathcal{A}_{-1}(\Omega)$. Actually, we will provide the relationship between $\mathcal{A}_m$ and $\mathcal{A}_{-1}$ for $1\le m\le n-1.$
In \cite{13}, Chen, Guan, Li and Scheuer introduced the following flow
\begin{equation}\label{flow-CGLS}
X_t=(c_{n,k}\phi'-\frac{\sigma_{k+1}}{\sigma_k}u)\nu,
\end{equation}
Similar to the Brendle-Guan-Li's flow, monotonicity property for quermassintegrals holds as long as the flow (\ref{flow-CGLS}) exists. It is claimed in \cite{16} and proved in \cite{13} that flow (\ref{flow-CGLS}) preserves convexity, and one has $C^1$-estimates for solutions, and upper and lower bounds for $F=\frac{\sigma_{k+1}}{\sigma_k}$ along the flow (\ref{flow-CGLS}).
For completeness, we will include a proof of convexity preserving which we will use later here.

\subsection{Convexity preserving}

In this subsection, we will prove that if $M$ is strictly convex, then along the normalized flow (\ref{flow-CGLS}), the principle curvatures of $M_t$ remain strictly positive. We will need the following algebraic lemma.

\begin{lemma}\label{convexity property}(\cite{19})
Let $F=\frac{\sigma_{k+1}}{\sigma_k}(h_{ij}),$  and $\{\tilde{h}^{ij}\}$ be the inverse matrix of $h_{ij},$ then
\[(F^{ij,rs}+2F^{ir}\tilde h^{js})\eta_{ij}\eta_{rs}\ge 2F^{-1}(F^{ij}\eta_{ij})^2,\]
for any real symmetric $n\times n$ matrix $(\eta_{ij})$.
Here, $F^{ij}=\frac{\partial F}{\partial h_{ij}}$ and $F^{ij,rs}=\frac{\partial^2 F}{\partial h_{ij}\partial h_{rs}}$.
\end{lemma}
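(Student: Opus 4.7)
The lemma expresses that $F = \sigma_{k+1}/\sigma_k$ is \emph{inverse concave} on the positive cone. My plan is to reduce the inequality, via $O(n)$-invariance, to a diagonalized statement and then to treat its diagonal and off-diagonal parts separately using known concavity results for quotients of elementary symmetric polynomials.

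Both sides are invariant under $h\mapsto O^T hO$, $\eta\mapsto O^T\eta O$ with $O\in O(n)$, so I would first rotate $(h_{ij})$ to $\mathrm{diag}(\kappa_1,\ldots,\kappa_n)$ with $\kappa_i>0$. Then $\tilde h^{ij}$ and $F^{ij}$ become diagonal with entries $1/\kappa_i$ and $F^i:=\partial F/\partial\kappa_i$. Using the standard second-derivative formula for $O(n)$-invariant functions of a symmetric matrix,
\[
F^{ij,rs}\eta_{ij}\eta_{rs}=\sum_{i,j}\frac{\partial^2 F}{\partial\kappa_i\partial\kappa_j}\eta_{ii}\eta_{jj}+\sum_{i\neq j}\frac{F^i-F^j}{\kappa_i-\kappa_j}\eta_{ij}^2,
\]
the claim decouples into the diagonal inequality
\[
\sum_{i,j}\frac{\partial^2 F}{\partial\kappa_i\partial\kappa_j}\eta_{ii}\eta_{jj}+2\sum_i\frac{F^i}{\kappa_i}\eta_{ii}^2\ge 2F^{-1}\Bigl(\sum_i F^i\eta_{ii}\Bigr)^2
\]
and the pointwise off-diagonal inequalities
\[
\frac{F^i-F^j}{\kappa_i-\kappa_j}+\frac{F^i}{\kappa_j}+\frac{F^j}{\kappa_i}\ge 0\qquad (i\neq j).
\]

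For the diagonal piece, I would introduce the dual $F_*(x):=1/F(1/x_1,\ldots,1/x_n)$ and show it is concave at $x_i=1/\kappa_i$. Using the reciprocal identity $\sigma_j(1/x)=\sigma_{n-j}(x)/\sigma_n(x)$ one gets the explicit form $F_*(x)=\sigma_{n-k}(x)/\sigma_{n-k-1}(x)$, which is a classically concave function on the positive cone (a standard consequence of the Newton-Maclaurin inequalities of Lemma \ref{lemma-NM}). Computing the Hessian of $F_*$ in $x$ and translating back to $\kappa$ via the chain rule yields precisely the diagonal inequality, with the $+2F^i/\kappa_i\,\eta_{ii}^2$ correction arising exactly from the $\partial x_i/\partial\kappa_i=-\kappa_i^{-2}$ factor in the reciprocal change of variables. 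For the off-diagonal part, since $F^i>0$ throughout the positive cone, one verifies the inequality directly by clearing denominators and applying $F^i=(\sigma_k\,\partial_i\sigma_{k+1}-\sigma_{k+1}\,\partial_i\sigma_k)/\sigma_k^2$ together with Newton-Maclaurin; the coincident case $\kappa_i=\kappa_j$ is handled by passing to the limit $(F^i-F^j)/(\kappa_i-\kappa_j)\to \partial^2 F/\partial\kappa_i^2$.

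The main obstacle is the off-diagonal step: the divided difference $(F^i-F^j)/(\kappa_i-\kappa_j)$ has no a priori sign, so the pointwise inequality is genuine and demands careful algebra rather than a soft convexity argument. A cleaner unified alternative would be to prove inverse concavity at the matrix level, i.e.\ that $H\mapsto 1/F(H^{-1})$ is concave on positive-definite $H$; both the diagonal and the off-diagonal inequalities would then follow simultaneously from a single second-variation computation, eliminating the need to treat the divided-difference terms separately.
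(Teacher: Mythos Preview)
The paper does not give a proof of this lemma; it is quoted from Urbas \cite{19}. Your outline is precisely the standard argument for inverse concavity of $\sigma_{k+1}/\sigma_k$ (diagonalize, split into diagonal and off-diagonal pieces, and use the dual function $F_*$), and it is correct.

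One remark on the step you single out as the obstacle. You say the off-diagonal inequality ``demands careful algebra rather than a soft convexity argument,'' but in fact it too follows softly from the concavity and symmetry of the very same dual function $F_*(x)=\sigma_{n-k}(x)/\sigma_{n-k-1}(x)$ at $x_i=1/\kappa_i$. From your chain-rule computation one has $(F_*)^i=F^{-2}\kappa_i^{2}F^i$ and $x_i-x_j=-(\kappa_i-\kappa_j)/(\kappa_i\kappa_j)$, so the standard consequence of concavity plus symmetry,
\[
\frac{(F_*)^i-(F_*)^j}{x_i-x_j}\le 0,
\]
becomes exactly $\dfrac{\kappa_i^{2}F^i-\kappa_j^{2}F^j}{\kappa_i-\kappa_j}\ge 0$. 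Multiplying your off-diagonal inequality through by $\kappa_i\kappa_j(\kappa_i-\kappa_j)$ shows it is the same statement. Hence the single identification $F_*=\sigma_{n-k}/\sigma_{n-k-1}$ already handles both the diagonal Hessian estimate and the off-diagonal divided-difference sign, and your ``cleaner unified alternative'' is effectively already present at the eigenvalue level---no separate hard algebra is needed.
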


\begin{lemma}\label{convexity preserving}
Let $X(\cdot,t)$ be a smooth, closed and strictly convex solution to the normalized flow (\ref{flow-CGLS}) for $t\in [0,T),$ which encloses the origin.  There is a positive constant $C$ depending only on $M$, the upper and lower bounds of $\rho$ ($C^0$ estimate), the gradient estimate $\rho$ ($C^1$ estimate), and the upper bound of $F$ such that the principal curvatures of $X(\cdot,t)$ are bounded from below
\[\kappa_i(\cdot,t)\ge \frac{1}{C}, \quad \forall t\in [0,T) \quad \text{and} \quad i=1,\cdots,n.\]
\end{lemma}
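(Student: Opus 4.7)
My plan is to apply the parabolic maximum principle to the largest eigenvalue of the inverse Weingarten matrix $\tilde h^{ij}=(h^{-1})^{ij}$, i.e.\ to $1/\kappa_{\min}$. The first step is to derive the evolution equation for $h_{ij}$ along flow (\ref{flow-CGLS}). Plugging $f=c_{n,k}\phi'-Fu$ into Proposition \ref{evolu equa}, using $R_{\nu ij\nu}=g_{ij}$ in $\mathbb{S}^{n+1}$, the sphere identity $\Phi=1-\phi'$ (so Lemma 2.3 gives $\nabla_i\nabla_j\phi'=-\phi'g_{ij}+uh_{ij}$), and Lemma 2.4 for $\nabla_i\nabla_j u$, followed by a Simons-type commutation of $\nabla_i\nabla_j h_{kl}$ into $\nabla_k\nabla_l h_{ij}$, yields an equation of schematic form
\begin{equation*}
\partial_t h_{ij} = uF^{kl}\nabla_k\nabla_l h_{ij} + uF^{kl,rs}\nabla_i h_{kl}\nabla_j h_{rs} + B_{ij},
\end{equation*}
where $B_{ij}$ is polynomial in $h$ with coefficients depending only on $\rho$, $\nabla\rho$, $u$, $\nabla u$ and $F$, hence already controlled by the hypotheses.

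Passing to $\tilde h^{ij}$ via $\partial_t\tilde h^{ij}=-\tilde h^{ik}\tilde h^{jl}\partial_t h_{kl}$ and $\nabla_m\tilde h^{ij}=-\tilde h^{ip}\tilde h^{jq}\nabla_m h_{pq}$, I obtain the Riccati-type equation
\begin{equation*}
\partial_t\tilde h^{ij}-uF^{kl}\nabla_k\nabla_l\tilde h^{ij} = -u\,\tilde h^{ip}\tilde h^{jq}F^{kl,rs}\nabla_p h_{kl}\nabla_q h_{rs} - 2uF^{kl}\tilde h^{mn}\nabla_k\tilde h^{im}\nabla_l\tilde h^{jn} + \widetilde B^{ij}.
\end{equation*}
Let $\mu(t)=\max_x\lambda_{\max}(\tilde h^{ij}(x,t))$, and suppose a first maximum is attained at some $(x_0,t_0)$. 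Choose coordinates at $(x_0,t_0)$ in which $h_{ij}$ is diagonal with smallest eigenvalue $\kappa_1=h_{11}$, so $\mu(t_0)=\tilde h^{11}(x_0,t_0)$. At this space-time maximum, $\nabla_p\tilde h^{11}=0$ and the spatial Hessian of $\tilde h^{11}$ is non-positive, so the only sign-indeterminate contribution to $\partial_t\tilde h^{11}$ is $-u(\tilde h^{11})^2 F^{kl,rs}\nabla_1 h_{kl}\nabla_1 h_{rs}$.

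To dominate this term I would invoke Lemma \ref{convexity property} with $\eta_{ij}=\nabla_1 h_{ij}$ (symmetric by Codazzi), which gives
\begin{equation*}
-F^{kl,rs}\nabla_1 h_{kl}\nabla_1 h_{rs}\le 2F^{kr}\tilde h^{ls}\nabla_1 h_{kl}\nabla_1 h_{rs}-2F^{-1}\bigl(F^{kl}\nabla_1 h_{kl}\bigr)^2.
\end{equation*}
The strategy is then to absorb the right-hand side (after using Codazzi and the critical condition $\nabla_p\tilde h^{11}=0$) into the good quadratic term $-2uF^{kl}\tilde h^{mn}\nabla_k\tilde h^{im}\nabla_l\tilde h^{jn}$ produced by differentiating $\tilde h=h^{-1}$ twice, leaving a remainder of order $C(1+\tilde h^{11})$ with $C$ depending on $M$, the $C^0$/$C^1$ bounds, and $\sup F$. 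Grönwall's inequality then yields a uniform upper bound on $\mu$ on $[0,T)$, equivalent to the desired lower bound $\kappa_i\ge 1/C$. The principal obstacle is exactly this algebraic absorption: verifying that the concavity-type inequality of Lemma \ref{convexity property}, combined with Codazzi and the first-order critical conditions, produces the precise cancellation needed for the maximum principle to close the estimate; the remaining steps are routine if lengthy computations.
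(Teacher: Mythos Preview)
Your overall architecture matches the paper's: compute $\partial_t h_{ij}$ from Proposition~\ref{evolu equa}, commute derivatives via Simons/Codazzi, pass to the inverse, and invoke Lemma~\ref{convexity property} to control the Hessian term $F^{kl,rs}\nabla_1 h_{kl}\nabla_1 h_{rs}$ at a space--time maximum. Two points, however, separate your sketch from a complete proof, and the second is a genuine gap.

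\medskip
\noindent\textbf{Test function.} The paper does \emph{not} apply the maximum principle to $\tilde h^{11}$ alone; it uses the barrier $W=\log\tilde h^{11}-\log u$. The reason is that $B_{ij}$ is not ``already controlled'' in the way you assert: it contains the term $\big(uF^{kl}(h^2)_{kl}\big)h_{ij}$, and $F^{kl}(h^2)_{kl}$ depends on \emph{all} principal curvatures, for which you have no a~priori upper bound. Subtracting $\log u$ is what makes this harmless: the evolution $\partial_t u-uF^{kl}u_{kl}$ produces a matching $u^2F^{kl}(h^2)_{kl}$, and in the combination the $F^{kl}(h^2)_{kl}$--terms acquire the favourable sign $-2\tilde h^{11}u^2F^{kl}(h^2)_{kl}\le 0$ and can be discarded. (One can argue the sign is already good without the $u$--trick, but you should say so explicitly; ``polynomial in $h$ with controlled coefficients'' does not by itself bound $\widetilde B^{11}$.)

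\medskip
\noindent\textbf{Closing the estimate.} Your final step---``remainder of order $C(1+\tilde h^{11})$, then Gr\"onwall''---does not prove the lemma. Gr\"onwall yields $\tilde h^{11}(t)\le (1+\tilde h^{11}(0))e^{Ct}$, a bound that depends on $t$ (and the flow is used on $[0,\infty)$ in Proposition~\ref{Proposition 3.1}), whereas the lemma asserts a constant depending only on $M$, the $C^0$/$C^1$ bounds and $\sup F$. The paper closes the argument differently: the ambient curvature of $\mathbb S^{n+1}$ contributes $+2uF\,g_{ij}$ to $\partial_t h_{ij}$ (the $K=1$ term), which after inversion becomes the \emph{quadratic} good term $-2uF(\tilde h^{11})^2$. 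At the maximum of $W$ this yields an algebraic inequality of the form
\[
(\tilde h^{11})^2 - c_3\,\tilde h^{11} - c_4 \le 0,
\]
with $c_3,c_4$ depending only on the stated quantities, hence a time-independent bound on $\tilde h^{11}$. You have this term sitting inside your $\widetilde B^{ij}$ but have mis-sized the remainder as merely linear and thereby thrown away precisely the mechanism that makes the estimate uniform. Replace the Gr\"onwall step by isolating this $-2uF(\tilde h^{11})^2$ contribution and the argument goes through.
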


\begin{proof} From Proposition \ref{evolu equa} and using the fact that $\nabla_i\nabla_j\phi'=-K\nabla_i\nabla_j\Phi$, we compute
\begin{align*}
\partial_th_{ij}&=-\nabla_i\nabla_j(c_{n,k}\phi'-uF)+(c_{n,k}\phi'-uF)(h^2)_{ij}-K(c_{n,k}\phi'-uF)\delta^i_j\\
&=-(c_{n,k}\nabla_i\nabla_j\phi'-\nabla_i\nabla_j uF-\nabla_iu\nabla_jF-\nabla_iF\nabla_ju-u\nabla_i\nabla_jF)\\
&+(c_{n,k}\phi'-uF)(h^2)_{ij}-K(c_{n,k}\phi'-uF)\delta^i_j\\
&=Kc_{n,k}\nabla_i\nabla_j\Phi+\nabla_i\nabla_juF+\nabla_iu\nabla_jF+\nabla_iF\nabla_ju+u\nabla_i\nabla_jF\\
&-(c_{n,k}\phi'-uF)(h^2)_{ij}-K(c_{n,k}\phi'-uF)\delta^i_j\\
&=Kc_{n,k}(\phi'g_{ij}-uh_{ij})+(\nabla h_{ij}\nabla\Phi+\phi'h_{ij}-u(h^2)_{ij})F+h_{il}\nabla_l\Phi\nabla_jF+h_{jl}\nabla_{l}\Phi\nabla_{i}F\\
&+u\big(F^{kl}\nabla_k\nabla_lh_{ij}+(F^{kl}(h^2)_{kl}-KF^{kk})h_{ij}-F((h^2)_{ij}-Kg_{ij})+F^{kl,pq}\nabla_ih_{pq}\nabla_jh_{kl}\big)\\
&+(c_{n,k}\phi'-uF)(h^2)_{ij}-K(c_{n,k}\phi'-uF)\delta^i_j.\\
\end{align*}
Thus
\begin{align*}
\partial_th_{ij}=uF^{kl}\nabla_k\nabla_lh_{ij}+uF^{kl,pq}\nabla_ih_{pq}\nabla_j h_{kl}+\nabla h_{ij}\nabla\Phi F+h_{il}\nabla_l \Phi \nabla_jF+h_{jl}\nabla_l \Phi \nabla_iF\\
+(c_{n,k}\phi'-3uF)(h^2)_{ij}+\big((F^{kl}(h^2)_{kl}-KF^{kk})u-c_{n,k}Ku+\phi'F\big)h_{ij}+2KuF\delta^i_j.
\end{align*}
Assume $(\tilde h^{ij})$ to be the inverse matrix of $(h_{ij})$. The principle radii of curvature of $M_t$ are the eigenvalues of $\{\tilde h^{ik} g_{kj}\}.$ To derive a positive lower bound of principle curvatures, it suffices to prove that the eigenvalues of $(\tilde h^{ik}g_{kj})$ are bounded from above. For this, we consider the following quantity
\[W(x,t)=\log \Lambda(x,t)-\log u(x,t),\]
where
\[\Lambda(x,t)=\max\{\tilde h^{ij}(x,t)\xi_i\xi_j:g^{ij}(x,t)\xi_i\xi_j=1\}.\]
Assume $W$ attains its maximum on $\mathbb S^n\times [0,T']$ at $(x_0,t_0)$  for any fixed $T'<T$.
We choose a local orthonormal frame $e_1,e_2,\cdots,e_n$ on $M_t$ such that $(h_{ij})$ is diagonal at $F(x_0,t_0)$. By a rotation, we may also suppose that $\Lambda(x_0,t_0)=\tilde h^{ij}(x,t)\xi_i\xi_j$ with $\xi=(1,0,\cdots,0).$
Let
\[\omega(x,t)=\log \lambda(x,t)-\log u(x,t),\]
where $\lambda(x,t)=\tilde h^{11}/g^{11}.$ Then $\max_{\mathbb S^n\times [0,T']}W=\max_{\mathbb S^n\times [0,T']}\omega$ and so $\omega$ achieves its maximum at $(x_0,t_0).$ In the following we prove an upper bound for $\omega.$
\begin{align*}
\partial_t \lambda=&-(\tilde h^{11})^2\partial_th_{11}+\tilde h^{11}\partial_t g_{11}\\
=&-(\tilde h^{11})^2\Big(uF^{kl}\nabla_k\nabla_lh_{11}+uF^{kl,pq}\nabla_1h_{pq}\nabla_1 h_{kl}+\nabla h_{11}\nabla\Phi F+h_{1l}\nabla_l \Phi \nabla_1F\\
&+h_{1l}\nabla_l \Phi \nabla_1F+(c_{n,k}\phi'-3uF)(h^2)_{11}\\
&+\big((F^{kl}(h^2)_{kl}-KF^{kk})u-c_{n,k}Ku+\phi'F\big)h_{11}+2KuF\Big)+2(c_{n,k}\phi'-uF)\\
=&-(\tilde h^{11})^2\big(uF^{kl}\nabla_k\nabla_lh_{11}+uF^{kl,pq}\nabla_1h_{pq}\nabla_1 h_{kl}\big)+\nabla \lambda \nabla\Phi F-2\tilde h^{11}\nabla_1 \Phi \nabla_1F\\
&-(c_{n,k}\phi'-3uF)-\tilde h^{11}\big((F^{kl}(h^2)_{kl}-KF^{kk})u-c_{n,k}Ku+\phi'F\big)\\
&-2KuF(\tilde h^{11})^2+2(c_{n,k}\phi'-uF)\\
=&uF^{kl}\nabla_k\nabla_l \lambda-2uF^{kl}(\tilde h^{11})^2\tilde h^{pq}\nabla_1h_{kp}\nabla_1h_{lq}-u(\tilde h^{11})^2F^{kl,pq}\nabla_1h_{pq}\nabla_1h_{kl}\\
&+\nabla \lambda \nabla\Phi F-2\tilde h^{11}\nabla_1 \Phi \nabla_1F+(c_{n,k}\phi'+uF)\\
&-\tilde h^{11}\big((F^{kl}(h^2)_{kl}-KF^{kk})u-c_{n,k}Ku+\phi'F\big)-2KuF(\tilde h^{11})^2,
\end{align*}
where we used the fact that
\begin{align*}
\nabla_i \lambda&=-(\tilde h^{11})^2h_{11,i},\\
\nabla_i\nabla_j \lambda&=-(\tilde h^{11})^2\nabla_i\nabla_jh_{11}+2\tilde h^{pq}\nabla_1h_{ip}\nabla_1h_{jq}.
\end{align*}
By Lemma \ref{convexity property}, we have
\begin{align*}
\partial_t \lambda &\le uF^{kl}\nabla_k\nabla_l \lambda-2uF^{-1}(\tilde h^{11})^2(\nabla_1 F)^2+\nabla \lambda \nabla\Phi F
-2\tilde h^{11}\nabla_1 \Phi \nabla_1F\\
&+(c_{n,k}\phi'+uF)-\tilde h^{11}\big((F^{kl}(h^2)_{kl}-KF^{kk})u-c_{n,k}Ku+\phi'F\big)\\
&-2KuF(\tilde h^{11})^2.
\end{align*}
On the other hand,
\begin{align*}
&\partial_t u-uF^{kl}u_{kl}\\
=&f\phi'-\nabla\Phi\nabla f-uF^{kl}u_{kl}\\
=&(c_{n,k}\phi'-uF)\phi'-\nabla\Phi\nabla (c_{n,k}\phi'-uF)-uF^{kl}(\nabla h_{kl}\nabla\Phi+\phi'h_{kl}-u(h^2)_{kl})\\
=&c_{n,k}(\phi')^2+Kc_{n,k}|\nabla \phi'|^2+\nabla \Phi\nabla uF-2u\phi'F+u^2F^{kl}(h^2)_{kl}.
\end{align*}
Note that, at $(x_0,t_0)$,
\[\nabla_i \log \lambda=\nabla_i \log u, \quad \forall i=1,\cdots,n,\]
and
\[\nabla_i\nabla_j (\log \lambda-\log u)\le 0.\]
Then we have
\begin{align*}
\frac{1}{\lambda u}(u^2F^{kl}\nabla_k\nabla_l \lambda-u\lambda F^{kl}\nabla_k\nabla_l u)&=uF^{kl}\nabla_k\nabla_l (\log \lambda-\log u),\\
\frac{1}{\lambda u}(uF\nabla \lambda\nabla \Phi-\lambda F\nabla \Phi \nabla u)&=0.
\end{align*}
Thus
\begin{align*}
0&\le\partial_t (\log \lambda-\log u)=\frac{\lambda_t}{\lambda}-\frac{u_t}{u}\\
&\le \frac{1}{\lambda u}\Big( u^2F^{kl}\nabla_k\nabla_l \lambda-2u^2F^{-1}(\tilde h^{11})^2(\nabla_1 F)^2+uF\nabla \lambda \nabla\Phi
-2u\tilde h^{11}\nabla_1 \Phi \nabla_1F\\
&+u(c_{n,k}\phi'+uF)-\tilde h^{11}\big((F^{kl}(h^2)_{kl}-KF^{kk})u^2-c_{n,k}Ku^2+\phi'uF\big)\\
&-2Ku^2F(\tilde h^{11})^2-\big(\lambda c_{n,k}(\phi')^2+\lambda Kc_{n,k}|\nabla \phi'|^2+\nabla \Phi \nabla u \lambda F-2u\lambda \phi' F\\
&+\lambda u^2F^{kl}(h^2)^{kl})+u\lambda F^{kl}\nabla_k\nabla_l u\big)\Big)\\
&=uF^{kl}\nabla_k\nabla_l(\log \lambda-\log u)+\frac{1}{\lambda u}\Big(\big(-2u^2F^{-1}(\tilde h^{11})^2(\nabla_1 F)^2-2u\tilde h^{11}\nabla_1 \Phi \nabla_1F\big)\\
&-2Ku^2(\tilde h^{11})^2-\tilde h^{11}\big((2F^{kl}(h^2)_{kl}-KF^{kk})u^2-c_{n,k}Ku^2-\phi'uF-c_{n,k}(\phi')^2\\
&+Kc_{n,k}|\nabla \phi'|^2 \big)+u(c_{n,k}\phi'+uF)\Big)\\
&=uF^{kl}\nabla_k\nabla_l(\log \lambda-\log u)+\frac{1}{\lambda u}\Big(\frac{1}{2}F(\nabla_1 \Phi)^2-2Ku^2(\tilde h^{11})^2\\
&-\tilde h^{11}\big((2F^{kl}(h^2)_{kl}-KF^{kk})u^2-c_{n,k}Ku^2-\phi'uF-c_{n,k}(\phi')^2+Kc_{n,k}|\nabla \phi'|^2 \big)\\
&+u(c_{n,k}\phi'+uF)\Big).
\end{align*}

Now we assume $K=1.$
It implies that
\begin{align*}
&-2u^2(\tilde h^{11})^2+\tilde h^{11}(F^{kk}u^2+c_{n,k}(u^2+(\phi')^2-|\nabla \phi'|^2)+u\phi' F)\\
&+u(c_{n,k}\phi'+uF)+\frac{1}{2}F(\nabla_1 \Phi)^2\ge 0.
\end{align*}
Then we have
\[(\tilde h^{11})^2-c_3\tilde h^{11}-c_4\le 0,\]
where the positive constants $c_3,c_4$ depend on the upper and lower bound of $\rho,$ the lower bound of $u$  and the upper bound of $F$. Since $u=\frac{\phi^2}{\sqrt{\phi^2+|\nabla \rho |^2}}$( see equation (4.1)  in \cite{15}),
we can conclude that  the lower bound of $u$ depends on the gradient estimate of $\rho$ ($C^1$ estimate).

The upper bound for $\tilde {h}^{11}$ implies an upper bound for $\omega$ since $u$ is bounded from below by a positive constant. This finishes the proof of the lemma.
\end{proof}

\subsection{The relation between ${\mathcal A}_m$ and ${\mathcal A}_{-1}$} Now we will use the flow (\ref{flow-CGLS}) to obtain the relation between ${\mathcal A}_m$ and ${\mathcal A}_{-1}$. In particular, the case $m=1$ will be served as the initial condition when we prove Theorem \ref{thm:1.2} by induction for odd numbers.

\begin{proposition}\label{Proposition 3.1}
Let $M$ be a closed convex $C^2$-hypersurface in $\mathbb{S}_+^{n+1},$ then the following inequality holds,
\[\mathcal{A}_{m}(\Omega)\ge \xi_{m,-1}(\mathcal{A}_{-1}(\Omega)), \quad \forall 0\le m\le n-1,\]
where $\xi_{m,-1}$ is the unique positive function defined on $(0,s_{-1})$ such that the equality holds if and only if $M$ is a geodesic sphere.
\end{proposition}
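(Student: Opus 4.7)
The plan is to apply the Chen-Guan-Li-Scheuer flow (\ref{flow-CGLS}) at the parameter $k=0$, i.e., the flow (\ref{k=0}), starting from $M$, and to extract the desired inequality from the interplay between volume preservation, monotonicity of $\mathcal{A}_m$, and smooth convergence to a geodesic sphere.

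First I would establish long-time existence. The $C^0$ and $C^1$ estimates for (\ref{flow-CGLS}) are already recorded in [13]; convexity is preserved by Lemma \ref{convexity preserving}; and for $k=0$ the equation is quasi-linear, so $C^{2,\alpha}$ estimates follow from classical PDE theory. Combined with the monotonicity derived below, these force smooth convergence of the evolving hypersurfaces $M_t$ to a geodesic sphere $\Sigma_\infty \subset \mathbb{S}_+^{n+1}$ as $t \to \infty$.

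Next I would verify the monotonicity. The constant $c_{n,0}$ is chosen so that volume is preserved: using the Minkowski identity (Proposition \ref{prop-Min}) with $k=0$, namely $\int_M \sigma_1 u = n \int_M \phi'$, one obtains
\[\frac{d}{dt}\mathcal{A}_{-1}(\Omega_t) = \int_M (c_{n,0}\phi' - \sigma_1 u)\,d\mu_g = 0\]
when $c_{n,0}=n$. For $0 \le m \le n-1$, formula (\ref{evolu equa A_k}) gives
\[\frac{d}{dt}\mathcal{A}_m(\Omega_t) = (m+1)\int_M (n\phi' - \sigma_1 u)\,\sigma_{m+1}\,d\mu_g.\]
Applying Minkowski again to rewrite $\int_M \phi'\sigma_{m+1}$ as $\frac{m+2}{n-m-1}\int_M \sigma_{m+2}u$, the right-hand side becomes
\[(m+1)\int_M u\,\Bigl(\tfrac{n(m+2)}{n-m-1}\sigma_{m+2} - \sigma_1\sigma_{m+1}\Bigr)\,d\mu_g.\]
Iterating the Newton-MacLaurin inequality (Lemma \ref{lemma-NM}) along the monotone sequence $\{(\sigma_l/\binom{n}{l})^{1/l}\}_l$ yields the pointwise inequality $\sigma_1 \sigma_{m+1} \ge \tfrac{n(m+2)}{n-m-1}\,\sigma_{m+2}$ on $M_t$, so $\frac{d}{dt}\mathcal{A}_m(\Omega_t) \le 0$, with equality at time $t$ if and only if $M_t$ is a geodesic sphere.

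Finally, combining preservation of $\mathcal{A}_{-1}$ with smooth convergence to $\Sigma_\infty$,
\[\mathcal{A}_m(\Omega) \ge \mathcal{A}_m(\Omega_\infty) = \xi_{m,-1}(\mathcal{A}_{-1}(\Omega_\infty)) = \xi_{m,-1}(\mathcal{A}_{-1}(\Omega)),\]
where the middle equality is the defining property of $\xi_{m,-1}$ on geodesic balls. Rigidity follows because equality forces $\tfrac{d}{dt}\mathcal{A}_m \equiv 0$ along the flow, which collapses the hypersurface to a geodesic sphere from the outset. The main obstacle is the chained Newton-MacLaurin comparison yielding $\tfrac{d}{dt}\mathcal{A}_m \le 0$ uniformly in $0 \le m \le n-1$ with the correct equality characterization, together with upgrading subsequential convergence to a \emph{smooth} limit that is a geodesic sphere; a secondary issue, deliberately sidestepped by restricting to $k=0$, is the open $C^2$ estimate for (\ref{flow-CGLS}) when $k\ge 1$.
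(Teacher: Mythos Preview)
Your proposal is correct and follows essentially the same route as the paper: run the $k=0$ case of the Chen--Guan--Li--Scheuer flow (which is precisely the Guan--Li mean curvature type flow of \cite{15}), use the Minkowski identity to see $\mathcal{A}_{-1}$ is preserved, combine Newton--Maclaurin with Minkowski to get $\frac{d}{dt}\mathcal{A}_m\le 0$, and pass to the spherical limit. The paper applies Newton--Maclaurin first and Minkowski second while you reverse the order, but the two computations are equivalent; the only points the paper makes explicit that you omit are the reduction from $C^2$ convex to smooth strictly convex by approximation, and that the relevant $C^0$, $C^1$, $H$-bound, and convergence statements for this specific flow come from \cite{15} rather than \cite{13}.
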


\begin{proof}
First of all, we can assume that the hypersurface is smooth and strictly convex, since otherwise we can use convolutions as in the proof of Corollary 1.2 in \cite{11} to obtain a sequence of approximating smooth strictly convex hypersurfaces converging in $C^2$ to $M$. The inequality follows from the approximation.  We will treat the equality case $\mathcal{A}_k(\Omega) = \xi_{k,k-2}(\mathcal{A}_{k-2}(\Omega))$ for general $1\le k\le n-1$ in the proof of Theorem 1.4.

When $k=0$, $F=H$ and (\ref{flow-CGLS}) becomes
\begin{equation}\label{k=0}
X_t=(n\phi'-uH)\nu.
\end{equation}
We have the $C^0$ estimate (see Proposition 4.1 in \cite{15}), $C^1$ estimate(see Proposition 5.2 in \cite{15}) and the upper bound of $H$ (see Corollary 3.3 in \cite{15}). Then we see that the convexity is preserved by Lemma \ref{convexity preserving}. Moreover, $C^2$ estimate follows directly from the theory of quasi-linear PDE. The surfaces converge exponentially to a sphere as $t \rightarrow \infty$ in the $C^{\infty}$ topology by Theorem 1.1 in \cite{15}.  Along the flow (\ref{flow-CGLS}), we have
\begin{align*}
\frac{d}{dt}\mathcal{A}_{-1}=\int_{M(t)}(n\phi'-Hu)=0,
\end{align*}
and
\begin{align*}
\frac{d}{dt}\mathcal{A}_{m}&=(m+1)\int_{M(t)}(n\phi'-Hu)\sigma_{m+1}d\mu\\
&\le (m+1)\int_{M(t)}(n\phi'\sigma_{m+1}-\frac{n(m+2)}{n-(m+1)}u\sigma_{m+2})d\mu\\
&=0,
\end{align*}
where we have used the Newton-Maclaurin inequality (Lemma \ref{lemma-NM}) and Minkowski identity (Proposition \ref{prop-Min}).
Then we have
\[\mathcal{A}_m(\Omega)\ge \xi_{m,-1}(\mathcal{A}_{-1}(\Omega)) \qquad \text{ for} \quad m=1,2,\cdots,n-1. \]
the equality holds only if $M$ is a geodesic sphere. By the definition of $\xi_{m,-1}$, we know that the equality holds if $M$ is a geodesic sphere.
Especially,
\[\mathcal{A}_1(\Omega)\ge \xi_{1,-1}(\mathcal{A}_{-1}(\Omega)),\]
with the equality holds if and if $M$ is a geodesic sphere.
\end{proof}

\section{Gerhardt's flow and applications}

In this section, following \cite{5}, we will use Gerhardt's flow to prove the main theorem.
Gerhardt \cite{14} considered the inverse curvature flows of strictly convex hypersurfaces in $\mathbb{S}^{n+1}$ and obtained smooth convergence of the flows to the equator.

Assume $F=\frac{\sigma_k}{\sigma_{k-1}}$. Then under the inverse curvature flow,
\begin{align}\label{e-ICF}
    X_t=\frac{\nu}{F}
\end{align}
with $M(0)=\partial B_{\rho}(o),$ the geodesic spheres stay as geodesic spheres and for time $t>0$, $M(t)=\partial B_{\rho(t)}(o).$ We first establish a Minkowski type inequality in $\mathbb{S}^{n+1}$ without weighted factor.

\begin{theorem}
Let $M$ be a closed convex $C^2$-hypersurface in $\mathbb{S}_+^{n+1},$ then the following inequality holds,
\[(\int_{M}\sigma_1d\mu_g)^2\ge \xi(\mathcal{A}_0^2(\Omega)),\]
where $\xi$ is the unique positive function defined on $(0,s^2_{0})$ such that the equality holds if and only if $M$ is a geodesic sphere.
Moreover, $\xi$ satisfies that
\begin{equation}\label{xi}
2\frac{n-1}{n}\xi(s)=(2n+2\xi'(s))s \quad \text{for} \quad s\in (0,s^2_{0}).
\end{equation}
In fact we can express $\xi$ explicitly by
\begin{equation}\label{def-xi-2}
\xi(s)=n^2(n+1)^{\frac{2}{n}}\omega_{n+1}^{\frac{2}{n}}s^{\frac{n-1}{n}}-n^2s,
\end{equation}
where $\omega_{n+1}$ is the volume of unit ball in ${\mathbb R}^{n+1}$.
\end{theorem}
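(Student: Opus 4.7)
The plan is to evolve $M$ by Gerhardt's inverse mean curvature flow---that is, \eqref{e-ICF} with $F=\sigma_1=H$---and to monitor the functional
\[
Q(t) := \Bigl(\int_{M_t}\sigma_1\,d\mu_g\Bigr)^2 - \xi\bigl(\mathcal{A}_0^2(\Omega_t)\bigr).
\]
By a convolution approximation as in the proof of Corollary 1.2 of \cite{11}, it suffices to prove the inequality when $M$ is smooth and strictly convex; in this regime Gerhardt's theorem \cite{14} guarantees that the flow exists on a maximal interval $[0,T)$, preserves strict convexity, and converges smoothly to the equator as $t\to T^{-}$. Before running the flow argument I would verify the explicit formulas: on a geodesic sphere of radius $\rho$ one has $\mathcal{A}_0=(n+1)\omega_{n+1}\sin^n\rho$ and $\int_{M}\sigma_1 = n(n+1)\omega_{n+1}\cos\rho\sin^{n-1}\rho$, so that eliminating $\rho$ yields the explicit form \eqref{def-xi-2} for $\xi$, which in turn satisfies the ODE \eqref{xi} and the boundary value $\xi(s_0^{\,2})=0$.

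Next I would compute evolutions along the flow. Proposition \ref{evolu equa A_k} with $l=0$ and $f=1/H$ gives $\frac{d}{dt}\mathcal{A}_0=\mathcal{A}_0$, so $\frac{d}{dt}\mathcal{A}_0^2 = 2\mathcal{A}_0^2$, while decomposing $\int_M\sigma_1 = \mathcal{A}_1 - n\Vol(\Omega)$ and using \eqref{evolution of sigma} gives
\[
\frac{d}{dt}\int_{M_t}\sigma_1\,d\mu_g \;=\; 2\int_{M_t}\frac{\sigma_2}{\sigma_1}\,d\mu_g \;-\; n\int_{M_t}\frac{1}{\sigma_1}\,d\mu_g.
\]
The Newton--Maclaurin inequality (Lemma \ref{lemma-NM} with $k=1$) in the form $\sigma_2/\sigma_1 \le \frac{n-1}{2n}\sigma_1$, together with the Cauchy--Schwarz bound $\bigl(\int_M\sigma_1\bigr)\bigl(\int_M 1/\sigma_1\bigr) \ge \mathcal{A}_0^2$, then produces
\[
\frac{d}{dt}\Bigl(\int_{M_t}\sigma_1\,d\mu_g\Bigr)^2 \;\le\; \frac{2(n-1)}{n}\Bigl(\int_{M_t}\sigma_1\,d\mu_g\Bigr)^2 - 2n\,\mathcal{A}_0^2.
\]
On the other hand, the ODE \eqref{xi} rearranges to $\frac{d}{dt}\xi(\mathcal{A}_0^2) = 2\xi'(\mathcal{A}_0^2)\mathcal{A}_0^2 = \frac{2(n-1)}{n}\xi(\mathcal{A}_0^2) - 2n\mathcal{A}_0^2$. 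Subtracting causes the $-2n\mathcal{A}_0^2$ term to cancel exactly, yielding the clean differential inequality $\frac{dQ}{dt} \le \frac{2(n-1)}{n}Q(t)$.

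Consequently $e^{-2(n-1)t/n}Q(t)$ is non-increasing on $[0,T)$; since $M(t)$ converges smoothly to the equator, $\int_M\sigma_1 \to 0$ and $\mathcal{A}_0^2 \to s_0^{\,2}$ with $\xi(s_0^{\,2})=0$, so $\lim_{t\to T^-}Q(t)=0$. This forces $Q(0)\ge 0$, which is the claimed inequality. If equality holds at $t=0$, then $Q(t)\equiv 0$, forcing both Newton--Maclaurin and Cauchy--Schwarz to be equalities at every point for every $t$: the former forces the principal curvatures to coincide and the latter forces the mean curvature to be constant, so each $M(t)$---and in particular $M$ itself---is a geodesic sphere. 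The only nontrivial technical input is Gerhardt's preservation of strict convexity and smooth convergence along \eqref{e-ICF}, which is standard; the heart of the proof is the precisely calibrated cancellation between \eqref{xi} and the combined Newton--Maclaurin/Cauchy--Schwarz bound, designed so that the extraneous $\mathcal{A}_0^2$ term vanishes.
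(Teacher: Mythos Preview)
Your argument is correct and follows essentially the same route as the paper: both run Gerhardt's inverse mean curvature flow $X_t=\frac{1}{H}\nu$, combine the Newton--Maclaurin bound $2\sigma_2/\sigma_1\le\frac{n-1}{n}\sigma_1$ with the Cauchy--Schwarz inequality $\mathcal{A}_0^2\le(\int\sigma_1)(\int 1/\sigma_1)$ to obtain $\frac{dQ}{dt}\le\frac{2(n-1)}{n}Q$, and then use smooth convergence to the equator to conclude $Q(0)\ge0$. The only cosmetic difference is that the paper first derives the ODE \eqref{xi} by differentiating the defining relation on geodesic spheres and afterwards solves it to get \eqref{def-xi-2}, whereas you compute the explicit formula directly and verify the ODE; the flow computation, the monotonicity of $e^{-2(n-1)t/n}Q(t)$, and the equality analysis are the same.
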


\begin{proof}
Again we assume $M$ to be smooth and strictly convex and we use the same method as we used in the proof of Proposition \ref{Proposition 3.1}.

The function $\xi$ is defined using the following relation:
\begin{equation}\label{def-xi}
(\int_{\partial B_{\rho(t)}(o)}\sigma_1d\mu_g)^2= \xi(\mathcal{A}_0^2(B_{\rho(t)}(o))).
\end{equation}

Set $f=\frac{1}{H}$. Using (\ref{evolution of sigma}) for $l=1$ and (\ref{evolu equa A_k}) for $l=0$, we have
\begin{align*}
&\frac{d}{dt}\big((\int_{\partial B_{\rho(t)}(o)}\sigma_1d\mu_g)^2- \xi(\mathcal{A}_0^2(B_{\rho(t)}(o)))\big)\\
&=2\int_{\partial B_{\rho(t)}(o)}\sigma_1d\mu_g\int_{\partial B_{\rho(t)}(o)}\frac{1}{\sigma_1}(2\sigma_2-n\sigma_0)d\mu_g\\
&-2\xi'(\mathcal{A}_0^2(B_{\rho(t)}(o)))\mathcal{A}_0(B_{\rho(t)}(o))\int_{\partial B_{\rho(t)}(o)}\sigma_1\frac{1}{\sigma_1}d\mu_g\\
&=2\frac{n-1}{n}(\int_{\partial B_{\rho(t)}(o)}\sigma_1d\mu_g)^2-2n\int_{\partial B_{\rho(t)}(o)}\sigma_1d\mu_g\int_{\partial B_{\rho(t)}(o)}\frac{1}{\sigma_1}d\mu_g\\
&-2\xi'(\mathcal{A}_0^2(B_{\rho(t)}(o)))\mathcal{A}_0^2(B_{\rho(t)}(o))\\
&=2\frac{n-1}{n}(\int_{\partial B_{\rho(t)}(o)}\sigma_1d\mu_g)^2-2n\mathcal{A}_0^2(B_{\rho(t)}(o))-2\xi'(\mathcal{A}_0^2(B_{\rho(t)}(o)))\mathcal{A}_0^2(B_{\rho(t)}(o)),\\
\end{align*}
where the last step follows from the fact that

\begin{equation}
\mathcal{A}_0^2(\Omega(t)) \le \int_{M(t)}\sigma_1d\mu_g\int_{M(t)}\frac{1}{\sigma_1}d\mu_g,
\end{equation}
with the inequality strict unless $\sigma_1=H$ is constant on $M(t)$. Hence we obtain that the equality holds if and only if $M(t)$ is a geodesic sphere.\\
By (\ref{def-xi}), we have
\[2\frac{n-1}{n}\xi(\mathcal{A}_0^2(B_{\rho(t)}(o)))-2n\mathcal{A}_0^2(B_{\rho(t)}(o))-2\xi'(\mathcal{A}_0^2(B_{\rho(t)}(o)))\mathcal{A}_0^2(B_{\rho(t)}(o))=0.\]
We can obtain that
\[2\frac{n-1}{n}\xi(s)=(2n+2\xi'(s))s \quad \text{for} \quad s\in (0,s^2_0).\]
Let $M(t)$ solve the inverse curvature flow equation $X_t=\frac{1}{\sigma_1}\nu$ with initial condition $M(0)=M.$ Denote $\mathcal{A}_k(t)=\mathcal{A}_k(\Omega_t)$, where $\Omega_t$ is the domain enclosed by $M(t)$, then we have
\begin{align*}
&\frac{d}{dt}\big((\int_{M(t)}\sigma_1d\mu_g)^2- \xi(\mathcal{A}_0^2(t))\big)\\
&=2\int_{M(t)}\sigma_1d\mu_g\int_{M(t)}\frac{1}{\sigma_1}(2\sigma_2-n\sigma_0)d\mu_g-2\xi'(\mathcal{A}_0^2(t))\mathcal{A}_0(t)\int_{M(t)}\sigma_1\frac{1}{\sigma_1}d\mu_g\\
&\le 2\frac{n-1}{n}(\int_{M(t)}\sigma_1d\mu_g)^2-2n\int_{M(t)}\sigma_1d\mu_g\int_{M(t)}\frac{1}{\sigma_1}d\mu_g-2\xi'(\mathcal{A}_0^2(t))\mathcal{A}_0^2(t)\\
&\le 2\frac{n-1}{n}(\int_{M(t)}\sigma_1d\mu_g)^2-2n\mathcal{A}_0^2(t)-2\xi'(\mathcal{A}_0^2(t))\mathcal{A}_0^2(t)\\
&= 2\frac{n-1}{n}\big((\int_{M(t)}\sigma_1d\mu_g)^2-\xi(\mathcal{A}_0^2(t))\big).\\
\end{align*}
We will have
\[\frac{d}{dt}\Big( e^{-2\frac{n-1}{n}t}\big((\int_{M(t)}\sigma_1d\mu_g)^2- \xi(\mathcal{A}_0^2(t)\big)\Big)\le 0.\]
Denote
\[Q(t)=e^{-2\frac{n-1}{n}t}\big((\int_{M(t)}\sigma_1d\mu_g)^2- \xi(\mathcal{A}_0^2(t))\big),\]
then
\[\frac{d}{dt}Q(t)\le 0.\]
Thus
\[Q(t)-Q(0)\le 0,\]
for all $t\in [0,T^*).$\\
It is proved in Theorem 1.1 in (\cite{14}) that the curvature flow converges to an equator, as $t\rightarrow T^*,$ and with
\[|\frac{\pi}{2}-\rho|_{m,\mathbb{S}^n}\le c_m\Theta \quad \forall t\in [t_{\delta},T^*),\]
where $\Theta=\arccos e^{t-T^*}.$
It follows that
\begin{align*}
&\Vol(\Omega_t)\rightarrow \Vol(B_{\frac{\pi}{2}}) ,\\
& \int_{M_t}d\mu_g\rightarrow |\Sigma(B_{\frac{\pi}{2}})|,\\
&\int_{M_t}\sigma_k d\mu_g\rightarrow 0 \quad \forall 1\le k\le n-1,
\end{align*}
as $t\rightarrow T^*.$ By the definition of $\mathcal{A}_k,$ we have
\[\mathcal{A}_k(t)\rightarrow \mathcal{A}_k(B_{\frac{\pi}{2}}(o))  \quad \forall  -1\le k\le n-1,\]
as $t\rightarrow T^*$, which implies that
\[\lim\limits_{t \to T^*}Q(t)=0.\]
Therefore, we have
\[(\int_{M(0)}\sigma_1d\mu_g)^2- \xi(\mathcal{A}_0^2(0))\ge 0,\]
i.e.,
\[(\int_{M}\sigma_1d\mu_g)^2- \xi(\mathcal{A}_0^2(\Omega))\ge 0,\]
with the equality holds only if $M$ is a geodesic sphere. By the definition of $\xi$, we know that the equality holds if $M$ is a geodesic sphere.

\vspace{.1in}

Finally, we will derive the explicit expression for $\xi$. First we can solve the ODE (\ref{xi}) to obtain
\begin{equation}\label{e-xi-ep}
\xi(s)=s^{\frac{n-1}{n}}\epsilon^{-\frac{n-1}{n}}\xi(\epsilon)-n^2(s-\epsilon^{\frac{1}{n}}s^{\frac{n-1}{n}}).
\end{equation}
It remains to compute the limit $\lim_{\epsilon\to 0}\epsilon^{-\frac{n-1}{n}}\xi(\epsilon)$.
For this purpose, we notice that since the metric on ${\mathbb S}^{n+1}$ is given by $ds^2=d\rho^2+\sin^2\rho dz^2$, we see that
\begin{equation}\label{e-area}
{\mathcal A}_0(B_{\rho}(o))=|\partial B_{\rho}(o)|=\sin^n\rho |{\mathbb S}^n|=(n+1)\omega_{n+1}\sin^n\rho.
\end{equation}
On the other hand, for $\partial B_{\rho}(0)\subset {\mathbb S}^{n+1}$, we have $\kappa_1=\kappa_2=\cdots \kappa_n=\frac{\cos\rho}{\sin\rho}$. Hence,
\begin{equation}\label{e-sigma1}
\int_{\partial B_{\rho}(o)}\sigma_1d\mu_g=n\frac{\cos\rho}{\sin\rho}|\partial B_{\rho}(o)|=n(n+1)\omega_{n+1}\cos\rho\sin^{n-1}\rho.
\end{equation}
Therefore, if we choose $s={\mathcal A}_0^2(B_{\rho}(o))=(n+1)^2\omega_{n+1}^2\sin^{2n}\rho$, then
\begin{align*}
\xi(s)=&\left(\int_{\partial B_{\rho}(o)}\sigma_1d\mu_g\right)^2=n^2(n+1)^2\omega^2_{n+1}\cos^2\rho\sin^{2(n-1)}\rho\\
=&n^2(n+1)^2\omega^2_{n+1}\cos^2\rho\left(\frac{s}{(n+1)^2\omega_{n+1}^2}\right)^{\frac{n-1}{n}}
=n^2(n+1)^{\frac{2}{n}}\omega_{n+1}^{\frac{2}{n}}\cos^2\rho s^{\frac{n-1}{n}}.
\end{align*}
Since $\rho\to 0$ as $s\to 0$, we see that
\begin{equation*}
\lim_{\epsilon\to 0}\epsilon^{-\frac{n-1}{n}}\xi(\epsilon)=\lim_{\rho\to0}\left(n^2(n+1)^{\frac{2}{n}}\omega_{n+1}^{\frac{2}{n}}\cos^2\rho\right)=n^2(n+1)^{\frac{2}{n}}\omega_{n+1}^{\frac{2}{n}}.
\end{equation*}
Now (\ref{def-xi-2}) follows by letting $\epsilon\to 0$ in (\ref{e-xi-ep}).
\end{proof}

Now we will establish the relation between $\mathcal{A}_2$ and $\mathcal{A}_0.$

\begin{proposition}\label{Proposition 4.1}
Let $M$ be a closed convex $C^2$-hypersurface in $\mathbb{S}_+^{n+1},$ then the following inequality holds,
\[\mathcal{A}_{2}(\Omega)\ge \xi_{2,0}(\mathcal{A}_{0}(\Omega)),\]
where $\xi_{2,0}$ is the unique positive function defined on $(0,s_{0})$ such that the equality holds if and only if $M$ is a geodesic sphere.
Moreover, $\xi_{2,0}$ satisfies that
\begin{equation}\label{xi_{2,0}}
\xi'_{2,0}(s)=\frac{(n-2)\xi_{2,0}(s)-(n-1)s}{ns} .
\end{equation}
In fact we can express $\xi_{2,0}$ explicitly by
\begin{equation}\label{def-xi-2-0-2}
\xi_{2,0}(s)=\frac{n(n-1)}{2}(n+1)^{\frac{2}{n}}\omega_{n+1}^{\frac{2}{n}}s^{\frac{n-2}{n}}-\frac{n-1}{2}s,
\end{equation}
where $\omega_{n+1}$ is the volume of unit ball in ${\mathbb R}^{n+1}$.\end{proposition}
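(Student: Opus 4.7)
The plan is to mirror the proof of the preceding theorem, replacing the inverse mean curvature flow ($F = \sigma_1/\sigma_0 = H$) by the Gerhardt flow $X_t = \nu/F$ with $F = \sigma_2/\sigma_1$. By the convolution approximation already invoked in Proposition \ref{Proposition 3.1}, I may assume $M$ is smooth and strictly convex. Gerhardt's theorem then guarantees that the evolving hypersurfaces remain convex on a maximal interval $[0,T^*)$ and converge smoothly to the equator as $t \to T^*$, so $\mathcal{A}_l(t) \to \mathcal{A}_l(B_{\pi/2}(o))$ for every $l$.

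Under this flow, (\ref{evolu equa A_k}) with $f = \sigma_1/\sigma_2$ gives
\begin{align*}
\frac{d}{dt}\mathcal{A}_0 = \int_{M(t)} \frac{\sigma_1^2}{\sigma_2}\, d\mu_g, \qquad \frac{d}{dt}\mathcal{A}_2 = 3 \int_{M(t)} \frac{\sigma_1 \sigma_3}{\sigma_2}\, d\mu_g.
\end{align*}
First I would evaluate these on concentric geodesic spheres, where $\kappa_i = \cot \rho$ for all $i$ and $\sigma_j = \binom{n}{j}\kappa^j$. This yields the explicit formulas $\dot{\mathcal{A}}_0 = \frac{2n}{n-1}\mathcal{A}_0$ and $\dot{\mathcal{A}}_2 = \frac{2(n-2)}{n-1}(\mathcal{A}_2 - (n-1)\mathcal{A}_0)$. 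Requiring that the relation $\xi_{2,0}(\mathcal{A}_0) = \mathcal{A}_2$ be preserved along the flow, i.e. $\dot{\mathcal{A}}_2 = \xi'_{2,0}(\mathcal{A}_0)\,\dot{\mathcal{A}}_0$, produces the ODE (\ref{xi_{2,0}}), which is linear of first order and can be integrated using the factor $s^{-(n-2)/n}$. The integration constant is pinned down by the limit $\rho \to 0$: from $\mathcal{A}_0 = (n+1)\omega_{n+1}\sin^n \rho$ and $\int_M \sigma_2 = \binom{n}{2}(n+1)\omega_{n+1}\cos^2\rho\,\sin^{n-2}\rho$ on $\partial B_\rho(o)$ one extracts the leading behaviour $\xi_{2,0}(s) \sim \tfrac{n(n-1)}{2}(n+1)^{2/n}\omega_{n+1}^{2/n}\, s^{(n-2)/n}$, which fixes the formula (\ref{def-xi-2-0-2}).

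For a general convex $M(t)$ I would then apply Newton--Maclaurin (Lemma \ref{lemma-NM}) twice: the bound $\sigma_1 \sigma_3 \le \frac{2(n-2)}{3(n-1)}\sigma_2^2$ gives $\frac{d}{dt}\mathcal{A}_2 \le \frac{2(n-2)}{n-1}(\mathcal{A}_2 - (n-1)\mathcal{A}_0)$, while $\sigma_1^2 \ge \frac{2n}{n-1}\sigma_2$ gives $\frac{d}{dt}\mathcal{A}_0 \ge \frac{2n}{n-1}\mathcal{A}_0$. Combining these with $\xi'_{2,0} \ge 0$ and substituting the ODE to rewrite $\frac{2n}{n-1}\xi'_{2,0}(\mathcal{A}_0)\mathcal{A}_0$, the terms pair up to yield the Gronwall-type inequality
\begin{align*}
\frac{d}{dt}\bigl(\mathcal{A}_2 - \xi_{2,0}(\mathcal{A}_0)\bigr) \le \frac{2(n-2)}{n-1}\bigl(\mathcal{A}_2 - \xi_{2,0}(\mathcal{A}_0)\bigr).
\end{align*}
Hence $Q(t) := e^{-\frac{2(n-2)}{n-1}t}\bigl(\mathcal{A}_2(t) - \xi_{2,0}(\mathcal{A}_0(t))\bigr)$ is non-increasing. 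Since $Q(t) \to 0$ as $t \to T^*$ by the smooth convergence of $M(t)$ to $B_{\pi/2}(o)$, we deduce $Q(0) \ge 0$, which is the desired inequality. Equality at $t = 0$ forces equality in the $\sigma_1\sigma_3$ Newton--Maclaurin step at $t=0$, hence all principal curvatures of $M$ coincide and $M$ is a geodesic sphere. The main obstacle I anticipate is the correct pairing of the two Newton--Maclaurin bounds through the ODE: the lower bound on $\dot{\mathcal{A}}_0$ must be multiplied against the positive factor $\xi'_{2,0}(\mathcal{A}_0)$ and then matched against the upper bound on $\dot{\mathcal{A}}_2$, so that the coefficient $\frac{2(n-2)}{n-1}$ produced on both sides cancels cleanly and the monotone quantity $Q$ closes up.
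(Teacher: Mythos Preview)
Your proposal is correct and follows essentially the same route as the paper: run Gerhardt's inverse curvature flow with $F=\sigma_2/\sigma_1$, read off the ODE for $\xi_{2,0}$ from the geodesic-sphere case, apply the two Newton--Maclaurin inequalities (together with $\xi'_{2,0}\ge 0$, which the paper invokes only implicitly) to obtain the differential inequality for $\mathcal{A}_2-\xi_{2,0}(\mathcal{A}_0)$, and conclude via the monotonicity of $Q(t)=e^{-\frac{2(n-2)}{n-1}t}(\mathcal{A}_2-\xi_{2,0}(\mathcal{A}_0))$ and the smooth convergence to the equator. Your explicit identification of the sign condition $\xi'_{2,0}\ge 0$ as the hinge that lets the two Newton--Maclaurin bounds combine is exactly the point where the computation closes up, and it matches the paper's (unannotated) inequality step.
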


\begin{proof}
Again we assume $M$ to be smooth and strictly convex and we use the same method as we used in the proof of Proposition \ref{Proposition 3.1}.\\
For $0< \rho(t)< \frac{\pi}{2},$ the function $\xi_{2,0}$ is defined by
\begin{equation}\label{def-xi-0}
\mathcal{A}_2(B_{\rho(t)}(o))-\xi_{2,0}(\mathcal{A}_{0}(B_{\rho(t)}(o)))=0.
\end{equation}
Using (\ref{evolu equa A_k})  for $l=0$ and $l=2$ and considering the flow (\ref{e-ICF}) with $F=\frac{\sigma_2}{\sigma_1}$, we have
\begin{align*}
&\frac{d}{dt} \big( \mathcal{A}_2(B_{\rho(t)}(o))-\xi_{2,0}(\mathcal{A}_{0}(B_{\rho(t)}(o))\big)\\
&=3\int_{\partial B_{\rho(t)}(o)}\sigma_3\frac{1}{\frac{\sigma_2}{\sigma_1}}d\mu_g-\xi_{2,0}'(\mathcal{A}_0(B_{\rho(t)}(o)))\int_{\partial B_{\rho(t)}(o)}\sigma_1\frac{1}{\frac{\sigma_2}{\sigma_1}}d\mu_g\\
&=\frac{2(n-2)}{n-1}\int_{\partial B_{\rho(t)}(o)}\sigma_2d\mu_g-\frac{2n}{n-1}\xi_{2,0}'(\mathcal{A}_0(B_{\rho(t)}(o)))\int_{\partial B_{\rho(t)}(o)}\sigma_0d\mu_g,\\
\end{align*}
where the last step follows from the fact that the geodesic sphere is totally umbilic. (\ref{def-xi-0}) yields that
\begin{equation}\label{e-4.4}
 \frac{2(n-2)}{n-1}\int_{\partial B_{\rho(t)}(o)}\sigma_2d\mu_g=\frac{2n}{n-1}\xi_{2,0}'(\mathcal{A}_0(B_{\rho(t)}(o)))\int_{\partial B_{\rho(t)}(o)}\sigma_0d\mu_g.
\end{equation}
From the definition of $\mathcal{A}_k$, we have
\begin{equation}\label{e-4.5}
\mathcal{A}_2(B_{\rho(t)}(o))=\int_{\partial B_{\rho(t)}(o)}\sigma_2d\mu_g+\frac{K(n-1)}{2-1}\mathcal{A}_0(B_{\rho(t)}(o)).
\end{equation}
Combining (\ref{def-xi-0}), (\ref{e-4.4}) and (\ref{e-4.5}), we have
\[ \xi'_{2,0}(s)=\frac{(n-2)(\xi_{2,0}(s)-(n-1)s)}{ns}.\]
Let $M(t)$ solve the inverse curvature flow equation $X_t=\frac{\sigma_1}{\sigma_2}\nu$ with initial condition $M(0)=M.$ Denote $\mathcal{A}_k(t)=\mathcal{A}_k(\Omega_t)$, where $\Omega_t$ is the domain enclosed by $M(t)$, then we have
\begin{align*}
&\frac{d}{dt} \big( \mathcal{A}_2(t)-\xi_{2,0}(\mathcal{A}_{0}(t))\big)\\
&=3\int_{M(t)}\sigma_3\frac{1}{\frac{\sigma_2}{\sigma_1}}d\mu_g-\xi_{2,0}'(\mathcal{A}_0(t))\int_{M(t)}\sigma_1\frac{1}{\frac{\sigma_2}{\sigma_1}}d\mu_g\\
&\le \frac{2(n-2)}{n-1}\int_{M(t)}\sigma_2d\mu_g-\frac{2n}{n-1}\xi_{2,0}'(\mathcal{A}_0(t))\int_{M(t)}\sigma_0d\mu_g\\
&=\frac{2(n-2)}{n-1}(\mathcal{A}_2(t)-\frac{K(n-1)}{2-1}\mathcal{A}_0(t))-\frac{2n}{n-1}\xi_{2,0}'(\mathcal{A}_0(t))\mathcal{A}_0(t)\\
&=\frac{2(n-2)}{n-1}(\mathcal{A}_2(t)-\frac{K(n-1)}{2-1}\mathcal{A}_0(t))\\
&-\frac{2n}{n-1}\frac{(n-2)\xi_{2,0}(\mathcal{A}_0(t))-(n-1)\mathcal{A}_0(t)}{n\mathcal{A}_0(t)}\mathcal{A}_0(t)\\
&=\frac{2(n-2)}{n-1}(\mathcal{A}_2(t)-\xi_{2,0}(\mathcal{A}_0(t))).
\end{align*}
We have
\[\frac{d}{dt}( e^{-\frac{2(n-2)}{n-1}t}(\mathcal{A}_2(t)-\xi_{2,0}(\mathcal{A}_0(t))))\le 0.\]
Denote
\[Q_2(t)= e^{-\frac{2(n-2)}{n-1}t}(\mathcal{A}_2(t)-\xi_{2,0}(\mathcal{A}_0(t))),\]
then
\[\frac{d}{dt}Q_2(t)\le 0.\]
Thus
\[Q_2(t)-Q_2(0)\le 0,\]
for all $t\in [0,T^*).$\\
By the definition of $\mathcal{A}_k,$ we have
\[\lim\limits_{t \to T^*}Q(t)=0.\]
Therefore, we have
\[\mathcal{A}_2(0)-\xi_{2,0}(\mathcal{A}_0(0))\ge 0 ,\]
i.e.,
\[\mathcal{A}_2(\Omega)\ge \xi_{2,0}(\mathcal{A}_0(\Omega)) ,\]
 the equality holds only if $M$ is a geodesic sphere. By the definition of $\xi_{2,0}$, we know that the equality holds if $M$ is a geodesic sphere.

\vspace{.1in}
Finally, we will derive the explicit expression for $\xi_{2,0}$. First we can solve the ODE (\ref{xi_{2,0}}) to obtain
\begin{equation}\label{e-xi-ep-2}
\xi_{2,0}(s)=s^{\frac{n-2}{n}}\epsilon^{-\frac{n-2}{n}}\xi_{2,0}(\epsilon)-\frac{n-1}{2}(s-\epsilon^{\frac{2}{n}}s^{\frac{n-2}{n}}).
\end{equation}
It remains to compute the limit $\lim_{\epsilon\to 0}\epsilon^{-\frac{n-2}{n}}\xi_{2,0}(\epsilon)$.
For this purpose, recall that the area of the geodesic sphere is given by (\ref{e-area}).
On the other hand, for $\partial B_{\rho}(0)\subset {\mathbb S}^{n+1}$, we have $\kappa_1=\kappa_2=\cdots \kappa_n=\frac{\cos\rho}{\sin\rho}$. Hence, we have $\sigma_2=\frac{n(n-1)}{2}\frac{\cos^2\rho}{\sin^2\rho}$ so that
\begin{align*}
\mathcal{A}_2(B_{\rho}(o))
=&\int_{\partial B_{\rho}(o)}\sigma_2d\mu_g+(n-1)\mathcal{A}_0(B_{\rho}(o))\\
=&\frac{n(n-1)}{2}\frac{\cos^2\rho}{\sin^2\rho}|\partial B_{\rho}(o)|+(n-1)|\partial B_{\rho}(o)|.
\end{align*}
Therefore, if we choose $s={\mathcal A}_0(B_{\rho}(o))=|\partial B_{\rho}(o)|=(n+1)\omega_{n+1}\sin^{n}\rho$, then
\begin{align*}
\xi_{2,0}(s)=&\mathcal{A}_2(B_{\rho}(o))=\frac{n(n-1)}{2}\frac{\cos^2\rho}{\sin^2\rho}s+(n-1)s\\
=&\frac{n(n-1)}{2}\cos^2\rho s\left(\frac{s}{(n+1)\omega_{n+1}}\right)^{-\frac{2}{n}}+(n-1)s\\
=&\frac{n(n-1)}{2}(n+1)^{\frac{2}{n}}\omega_{n+1}^{\frac{2}{n}}\cos^2\rho s^{\frac{n-2}{n}}+(n-1)s.
\end{align*}
Since $\rho\to 0$ as $s\to 0$, we see that
\begin{align*}
\lim_{\epsilon\to 0}\epsilon^{-\frac{n-2}{n}}\xi_{2,0}(\epsilon)
=&\lim_{\rho\to0}\left(\frac{n(n-1)}{2}(n+1)^{\frac{2}{n}}\omega_{n+1}^{\frac{2}{n}}\cos^2\rho+(n-1)\epsilon^{\frac{2}{n}}\right)\\
=&\frac{n(n-1)}{2}(n+1)^{\frac{2}{n}}\omega_{n+1}^{\frac{2}{n}}.
\end{align*}
Now (\ref{def-xi-2-0-2}) follows by letting $\epsilon\to 0$ in (\ref{e-xi-ep-2}).\end{proof}

\begin{remark}
We can compare Theorem 4.1 and Proposition 4.2 with Theorem 1.5 (see (1.5) and (1.6)) of \cite{11}. The method we used to deduce Theorem 4.1 and Proposition 4.2 can be used to derive the relation between $\mathcal{A}_k$ and $\mathcal{A}_{k-2}$ for general $1\le k\le n-1.$
\end{remark}

Next, we will study the properties of the function $\xi_{k,k-2}$, which will be used in the proof of the main theorem.
\begin{proposition}\label{prop4.3}
For any $s\in (0,s_{k-2}),$ the following holds
\begin{equation}\label{xi_{k,k-2}}
\xi'_{k,k-2}(s)=\frac{n-k}{n-k+2}\frac{\xi_{k,k-2}(s)-K\frac{n-k+1}{k-1}s}{s-K\frac{n-k+3}{k-3}\xi^{-1}_{k-2,k-4}(s)} \quad \text{for}\quad k\ge 3,
\end{equation}
where $\xi_{k,k-2}$ and $\xi_{k-2,k-4}$ are defined as in (\ref{xi_{k,l}}).
\end{proposition}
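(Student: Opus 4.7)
The plan is to parametrize both sides of the defining relation for $\xi_{k,k-2}$ by the radius $\rho\in(0,\pi/2)$ of the geodesic ball $B_\rho(o)\subset\mathbb{S}^{n+1}_+$ and differentiate. By the defining relation (\ref{xi_{k,l}}),
\[\xi_{k,k-2}\bigl(\mathcal{A}_{k-2}(B_\rho(o))\bigr)=\mathcal{A}_k(B_\rho(o)).\]
Applying $\frac{d}{d\rho}$ together with the identity
\[\frac{d}{d\rho}\mathcal{A}_l(B_\rho(o))=(l+1)\int_{\partial B_\rho(o)}\sigma_{l+1}\,d\mu_g,\]
which is (\ref{evolu equa A_k}) specialized to the outward normal flow $X_t=\nu$ (so that the radius advances at unit speed), yields
\[\xi'_{k,k-2}\bigl(\mathcal{A}_{k-2}(B_\rho(o))\bigr)=\frac{(k+1)\int_{\partial B_\rho(o)}\sigma_{k+1}\,d\mu_g}{(k-1)\int_{\partial B_\rho(o)}\sigma_{k-1}\,d\mu_g}.\]

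Next I would use the fact that $\partial B_\rho(o)$ is totally umbilic with all principal curvatures equal to $\cot\rho$, hence $\sigma_l=\binom{n}{l}\cot^l\rho$ on $\partial B_\rho(o)$. Elementary manipulation of binomial coefficients gives
\[\frac{(k+1)\binom{n}{k+1}}{(k-1)\binom{n}{k-1}}=\frac{(n-k)(n-k+1)}{k(k-1)},\qquad \frac{\binom{n}{k}}{\binom{n}{k-2}}=\frac{(n-k+1)(n-k+2)}{k(k-1)},\]
so that, after canceling the common factor $\cot^2\rho$ between numerator and denominator, one arrives at
\[\xi'_{k,k-2}(s)=\frac{n-k}{n-k+2}\cdot\frac{\int_{\partial B_\rho(o)}\sigma_k\,d\mu_g}{\int_{\partial B_\rho(o)}\sigma_{k-2}\,d\mu_g},\qquad s:=\mathcal{A}_{k-2}(B_\rho(o)).\]

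The final step is to rewrite both integrals in terms of $\xi_{k,k-2}$ and $\xi_{k-2,k-4}^{-1}$ using Definition 1.1. Explicitly,
\[\int_{\partial B_\rho(o)}\sigma_k\,d\mu_g=\mathcal{A}_k(B_\rho)-\frac{K(n-k+1)}{k-1}\mathcal{A}_{k-2}(B_\rho)=\xi_{k,k-2}(s)-\frac{K(n-k+1)}{k-1}s,\]
and applying the same identity with $k$ replaced by $k-2$, together with $\mathcal{A}_{k-4}(B_\rho)=\xi_{k-2,k-4}^{-1}(\mathcal{A}_{k-2}(B_\rho))=\xi_{k-2,k-4}^{-1}(s)$, gives
\[\int_{\partial B_\rho(o)}\sigma_{k-2}\,d\mu_g=s-\frac{K(n-k+3)}{k-3}\xi_{k-2,k-4}^{-1}(s).\]
Substituting these two expressions into the ratio produces exactly (\ref{xi_{k,k-2}}). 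The whole argument is essentially binomial bookkeeping on totally umbilic geodesic spheres, so no genuine obstacle is anticipated; the only care point is to track the index shift and the sign of $K$ in Definition 1.1 correctly (and the limiting case $k=3$, where the denominator $k-3$ formally vanishes, is handled by invoking the primitive relation $\mathcal{A}_1=\int_M\sigma_1\,d\mu_g+nK\,\mathcal{A}_{-1}$ from Definition 1.1 in place of the generic recursion).
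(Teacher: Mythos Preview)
Your proof is correct and follows essentially the same approach as the paper: differentiate the defining relation for $\xi_{k,k-2}$ along geodesic spheres using (\ref{evolu equa A_k}), exploit total umbilicity to reduce the ratio $\frac{(k+1)\int\sigma_{k+1}}{(k-1)\int\sigma_{k-1}}$ to $\frac{n-k}{n-k+2}\cdot\frac{\int\sigma_k}{\int\sigma_{k-2}}$, and then rewrite via Definition~1.1 and the inverse $\xi_{k-2,k-4}^{-1}$. The only cosmetic difference is that the paper differentiates along the inverse curvature flow $X_t=\frac{\sigma_{k-1}}{\sigma_k}\nu$ rather than the unit normal flow $X_t=\nu$; since the speed is constant on each geodesic sphere this is merely a time reparametrization and the extra factor cancels from the ratio, so your route is in fact slightly more direct.
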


\begin{proof}
For $k\ge 3$ and $0< \rho(t)< \frac{\pi}{2},$ by the definition of $\xi_{k,k-2}$, we have
\begin{equation}\label{def-xi-k}
\mathcal{A}_k(B_{\rho(t)}(o))-\xi_{k,k-2}(\mathcal{A}_{k-2}(B_{\rho(t)}(o)))=0.
\end{equation}
By (\ref{evolu equa A_k}), we have along the flow (\ref{e-ICF}) with $F=\frac{\sigma_k}{\sigma_{k-1}}$
\begin{align*}
&\frac{d}{dt} \big( \mathcal{A}_k(B_{\rho(t)}(o))-\xi_{k,k-2}(\mathcal{A}_{k-2}(B_{\rho(t)}(o))\big)\\
&=(k+1)\int_{\partial B_{\rho(t)}(o)}\sigma_{k+1}\frac{1}{\frac{\sigma_k}{\sigma_{k-1}}}d\mu_g\\
&-(k-1)\xi'_{k,k-2}(\mathcal{A}_{k-2}(B_{\rho(t)}(o)))\int_{\partial B_{\rho(t)}(o)}\sigma_{k-1}\frac{1}{\frac{\sigma_k}{\sigma_{k-1}}}d\mu_g\\
&=\frac{k(n-k)}{n-k+1}\int_{\partial B_{\rho(t)}(o)}\sigma_kd\mu_g\\
&-\frac{(n-k+2)k}{n-k+1}\xi'_{k,k-2}(\mathcal{A}_{k-2}(B_{\rho(t)}(o)))\int_{\partial B_{\rho(t)}(o)}\sigma_{k-2}d\mu_g,\\
\end{align*}
where the last step follows from the fact that the geodesic sphere is totally umbilic. (\ref{def-xi-k}) yields that
\[\frac{k(n-k)}{n-k+1}\int_{\partial B_{\rho(t)}(o)}\sigma_kd\mu_g=\frac{(n-k+2)k}{n-k+1}\xi'_{k,k-2}(\mathcal{A}_{k-2}(B_{\rho(t)}(o)))\int_{\partial B_{\rho(t)}(o)}\sigma_{k-2}d\mu_g.\]
From the definition of $\mathcal{A}_k,$ we have
\[\mathcal{A}_k({B_{\rho(t)}(o)})=\int_{\partial B_{\rho(t)}(o)}\sigma_kd\mu_g+K\frac{n-k+1}{k-1}\mathcal{A}_{k-2}({B_{\rho(t)}(o)})\]
\[\mathcal{A}_{k-2}({B_{\rho(t)}(o)})=\int_{\partial B_{\rho(t)}(o)}\sigma_{k-2}d\mu_g+K\frac{n-k+3}{k-3}\mathcal{A}_{k-4}({B_{\rho(t)}(o)})\]
Under the assumption that
\[\mathcal{A}_{k-2}(B_{\rho}(o))=\xi_{k-2,k-4}(\mathcal{A}_{k-4}(B_{\rho}(o))),\]
we have
\begin{align*}
&\frac{k(n-k)}{n-k+1}\big(\xi_{k,k-2}(\mathcal{A}_{k-2}({B_{\rho(t)}(o)})\big)-K\frac{n-k+1}{k-1}\mathcal{A}_{k-2}({B_{\rho(t)}(o)})\big)\\
&-\frac{(n-k+2)k}{n-k+1}\big(\mathcal{A}_{k-2}({B_{\rho(t)}(o)})-\frac{K(n-k+3)}{k-3}\xi^{-1}_{k-2,k-4}(\mathcal{A}_{k-2}({B_{\rho(t)}(o)}))\big)\cdot \\
&\xi'_{k,k-2}(\mathcal{A}_{k-2}({B_{\rho(t)}(o)}))=0.
\end{align*}
Then we have
\[\xi'_{k,k-2}(s)=\frac{n-k}{n-k+2}\frac{\xi_{k,k-2}(s)-K\frac{n-k+1}{k-1}s}{s-K\frac{n-k+3}{k-3}\xi^{-1}_{k-2,k-4}(s)}\]
for any $s\in (0,s_{k-2}).$
\end{proof}

Now we can prove our main result.

\begin{proof}[Proof of Theorem \ref{thm:1.2}]
Case 1. $M$ is a closed strictly convex and smooth hypersurface. We will prove Theorem 1.4 by reduction.

Let $M(t)$ solve the inverse curvature flow equation $X_t=\frac{\sigma_{k-1}}{\sigma_k}\nu$ with initial condition $M(0)=M.$ Denote $\mathcal{A}_k(t)=\mathcal{A}_k(\Omega_t)$, where $\Omega_t$ is the domain enclosed by $M(t)$.
For $k=1,2$, the inequality (\ref{def-Ak-Ak-2}) holds by Proposition \ref{Proposition 3.1} with $m=1$ and Proposition \ref{Proposition 4.1}, respectively.
Then we can assume that
\[\mathcal{A}_{k-2}(\Omega)\ge \xi_{k-2,k-4}(\mathcal{A}_{k-4}(\Omega)),\]
for any strictly convex hypersurface $M$ in $\mathbb{S}^{n+1}$ and the equality holds if and only if $M$ is a geodesic sphere.
Since
\[\frac{d}{dt} ( \mathcal{A}_{k-2}(t))=(k-1)\int_{M(t)}\sigma_{k-1}\frac{1}{\frac{\sigma_k}{\sigma_{k-1}}}d\mu_g\ge \frac{k(n-k+2)}{n-k+1} \int_{M(t)}\sigma_{k-2}d\mu_g>0\] and $M(t)$ converges to an equator of $\mathbb{S}^{n+1}$, we have
\[\mathcal{A}_{k-2}(t)\in (0,s_{k}).\]
By Proposition \ref{prop4.3} and applying Newton-Maclaurin inequality, we have
\begin{align*}
&\frac{d}{dt} \big( \mathcal{A}_k(t)-\xi_{k,k-2}(\mathcal{A}_{k-2}(t))\big)\\
=&(k+1)\int_{M(t)}\sigma_{k+1}\frac{1}{\frac{\sigma_k}{\sigma_{k-1}}}d\mu_g-(k-1)\xi'_{k,k-2}(\mathcal{A}_{k-2}(t))\int_{M(t)}\sigma_{k-1}\frac{1}{\frac{\sigma_k}{\sigma_{k-1}}}d\mu_g\\
\le&\frac{k(n-k)}{n-k+1}\int_{M(t)}\sigma_kd\mu_g-\frac{(n-k+2)k}{n-k+1}\xi'_{k,k-2}(\mathcal{A}_{k-2}(t))\int_{M(t)}\sigma_{k-2}d\mu_g\\
=&\frac{k(n-k)}{n-k+1}(\mathcal{A}_k(t)-K\frac{n-k+1}{k-1}\mathcal{A}_{k-2}(t))\\
&-\frac{(n-k+2)k}{n-k+1}\xi'_{k,k-2}(\mathcal{A}_{k-2}(t))(\mathcal{A}_{k-2}(t)-K\frac{n-k+3}{k-3}\mathcal{A}_{k-4}(t))\\
=&\frac{k(n-k)}{n-k+1}(\mathcal{A}_k(t)-K\frac{n-k+1}{k-1}\mathcal{A}_{k-2}(t))\\
&-\frac{(n-k+2)k}{n-k+1}\frac{n-k}{n-k+2}\frac{\xi_{k,k-2}(\mathcal{A}_{k-2}(t))-K\frac{n-k+1}{k-1}\mathcal{A}_{k-2}(t)}{\mathcal{A}_{k-2}(t)-K\frac{n-k+3}{k-3}\xi^{-1}_{k-2,k-4}(\mathcal{A}_{k-2}(t))}\cdot\\
&(\mathcal{A}_{k-2}(t)-K\frac{n-k+3}{k-3}\mathcal{A}_{k-4}(t))\\
\le& \frac{k(n-k)}{n-k+1}(\mathcal{A}_k(t)-K\frac{n-k+1}{k-1}\mathcal{A}_{k-2}(t))\\
&-\frac{k(n-k)}{n-k+1}\left(\xi_{k,k-2}(\mathcal{A}_{k-2}(t))-K\frac{n-k+1}{k-1}\mathcal{A}_{k-2}(t)\right),
\end{align*}
where in the last step we use the assumption that
\[ \mathcal{A}_{k-4}(t)\le \xi^{-1}_{k-2,k-4}(\mathcal{A}_{k-2}(t)).\]
Therefore we have
\[\frac{d}{dt} \big( \mathcal{A}_k(t)-\xi_{k,k-2}(\mathcal{A}_{k-2}(t)\big)\le \frac{k(n-k)}{n-k+1}(\mathcal{A}_k(t)-\xi_{k,k-2}(\mathcal{A}_{k-2}(t))).\]
Assume that \[Q_k(t)=e^{-\frac{k(n-k)}{n-k+1}t}(\mathcal{A}_k(t)-\xi_{k,k-2}(\mathcal{A}_{k-2}(t))),\]
then
\[\frac{d}{dt}Q_k(t)\le 0.\]
Thus
\[Q_{k}(t)-Q_k(0)\le 0,\]
for all $t\in [0,T^*).$\\
By the definition of $\mathcal{A}_k,$ we have
\[\lim\limits_{t \to T^*}Q_k(t)=0.\]
Therefore,
\[\mathcal{A}_k(0)-\xi_{k,k-2}(\mathcal{A}_{k-2}(0))\ge 0 ,\]
i.e.,
\[\mathcal{A}_k(\Omega)\ge \xi_{k,k-2}(\mathcal{A}_{k-2}(\Omega)),\]
 the equality holds only if $M$ is a geodesic sphere. By the definition of $\xi_{k,k-2}$, we know that the equality holds if $M$ is a geodesic sphere.

Case 2.  $M$ is a closed convex $C^2$-hypersurface.

We can obtain a sequence of approximating smooth strictly convex hypersurfaces converging in $C^2$ to $M$.
The inequality follows from the approximation.
 We now treat the equality case $\mathcal{A}_k(\Omega) = \xi_{k,k-2}(\mathcal{A}_{k-2}(\Omega))$ for general $1\le k\le n-1$.

Assume $M$ is convex and the equality holds for general $1\le k \le n-1$, we will show $M$ is strictly $k$-convex. To see this, note that both $\mathcal{A}_{k}$ and $\mathcal{A}_{k-2}$ are positive, since there exists at least one elliptic point on a closed hypersurface in $\mathbb{S}_+^{n+1}.$ Let $M_+=\{x\in M|\sigma_k(x)>0\}.$ $M_+$ is open and nonempty. We claim that $M_+$ is closed. This would imply $M=M_+,$ so $M$ is strictly $k$ convex.

We now prove that $M_+$ is closed. We will follow the idea of \cite{3}. Pick any $\eta \in C^2_0(M_+)$ compactly supported in $M_+$. Let $M_s$ be the hypersurface determined by position function $X_s=X+s\eta \nu,$ where $X$ is the position function  of $M$ and $\nu$ is the unit outernormal of $M$ at $X$. Let $\Omega_s$ be the domain enclosed by $M_s$. It is easy to show that $M_s$ is $k$-convex when $s$ is small enough.
Define
\[\mathcal{I}_k(\Omega_s)=\mathcal{A}_k(\Omega_s)- \xi_{k,k-2}(\mathcal{A}_{k-2}(\Omega_s)).\]
Therefore $\mathcal{I}_k(\Omega_s)-\mathcal{I}_k(\Omega)\ge 0$ for $s$ small, which implies that
\[\frac{d}{ds}\mathcal{I}_k(\Omega_s)|_{s=0}=0.\]
Simple calculation yields

\[\partial_t \mathcal{A}_l=(l+1)\int_Mf\sigma_{l+1}.\]
Therefore,
\[\frac{d}{ds}\mathcal{I}_k(\Omega_s)|_{s=0}=(k+1)\int_M( \sigma_{k+1}-c_1\sigma_{k-1})\eta d\mu_g=0,\]
where $c_1=\frac{k-1}{k+1}\xi'_{k,k-2}(\mathcal{A}_{k-2})>0$ and for any $\eta \in C^2_0(M_+).$ Thus,
\begin{equation}\label{variation}
\sigma_{k+1}=c_1\sigma_{k-1}, \qquad \forall x\in M_+.
\end{equation}
It follows from the Newton-Maclaurine inequality that there is a dimensional constant $C(n,k)$ such that
\[\sigma_{k+1}\le C(n,k)\sigma^{1+\frac{2}{k-1}}_{k-1}(x), \quad x\in M_+.\]
By (\ref{variation}), there is a positive $c_2,$ such that
\[\sigma_{k-1}\ge c_2>0,\qquad \forall x\in M_+,\]
where $c_2=(\frac{c_1}{C(n,k)})^{\frac{k-1}{2}}$ is a positive constant depending only on $n,k$ and $\Omega.$
In view of  (\ref{variation}), we have
\[\sigma_{k+1}\ge c_1c_2 ,\qquad \forall x\in M_+,\]
which implies that
\[\sigma_k\ge c_3>0,\]
where $c_3=\sqrt{\frac{(n-k+1)(k+1)}{k(n-k)}c_1c_2^2}$ is a positive constant depending only on $n,k$ and $\Omega.$. It follows that $M_+$ is closed.

Then we claim that  the flow $(\ref{flow-CGLS})$ preserves the convexity in a short time. We denote $M$ by $M_0$. Approximate the initial surface $M_0$ by a strictly convex ones $M^{\epsilon}_0$, by the implicit function theorem, there exists a $t_0>0$ (independent of $\epsilon$) such that  $(\ref{flow-CGLS})$ has a regular solution $M^{\epsilon}(t)$ (with $C^0,C^1,C^2$ bounds) which satisfies that $M^{\epsilon}(0)=M^{\epsilon}_0$  for $0\le t\le t_0$. Strict convexity is preserved for the approximate flows $M^{\epsilon}(t)$ by Lemma \ref{convexity preserving}. Letting $\epsilon \rightarrow 0,$  we could obtain that $M(t)$ is convex for any $0\le t\le t_0$.

Now we will prove that the equality  $\mathcal{A}_k(\Omega_t) = \xi_{k,k-2}(\mathcal{A}_{k-2}(\Omega_t))$ would be preserved at least in a short time along the flow $(\ref{flow-CGLS})$.  On one hand, by (\ref{evolu equa A_k}), we could obtain that 
\[\frac{d}{dt}\mathcal{A}_k=(k+1)\int(c_{n,k}\phi'\sigma_{k+1}-\frac{\sigma^2_{k+1}}{\sigma_k})\le 0,\]
and 
\[\frac{d}{dt}\mathcal{A}_{k-2}=(k-1)\int(c_{n,k}\phi'\sigma_{k-1}-\frac{\sigma_{k+1}\sigma_{k-1}}{\sigma_k})\ge 0.\] 
It implies that $\mathcal{A}_k(\Omega_t)\le \mathcal{A}_k(\Omega)$ and $\mathcal{A}_{k-2}(\Omega)\le \mathcal{A}_{k-2}(\Omega_t)$ for any $0\le t\le t_0$. Since $\mathcal{A}_k(\Omega)= \xi_{k,k-2}(\mathcal{A}_{k-2}(\Omega)),$ we have
\[\mathcal{A}_k(\Omega_t)\le \mathcal{A}_k(\Omega)=\xi_{k,k-2}(\mathcal{A}_{k-2}(\Omega))\le \xi_{k,k-2}(\mathcal{A}_{k-2}(\Omega_t)),\]
 along the flow $(\ref{flow-CGLS})$. On the other hand, since the convexity is preserved, we could obtain the inequality  $\mathcal{A}_k(\Omega_t) \ge \xi_{k,k-2}(\mathcal{A}_{k-2}(\Omega_t))$.

As a result, the equality of the Newton-Maclaurin inequality must be held at every point of M(t) for each $0\le t\le t_0.$ This implies that $M(t)$ is a geodesic sphere for each $0\le t\le t_0.$ In particular, $M_0$ is a geodesic sphere.

This finishes the proof of the theorem.
 \end{proof}

\renewcommand{\abstractname}{Acknowledgements}
\begin{abstract}
The authors would like to thank Professor Jiayu Li and Professor Pengfei Guan for helpful suggestions. The authors would also be grateful for useful discussions with Professor Yong Wei.
\end{abstract}
 ~\\

%=================Reference====================

\end{document}